\theoremstyle{plain}
\newtheorem{theorem}{Theorem}[section]
\newtheorem{lemma}[theorem]{Lemma}
\newtheorem{corollary}[theorem]{Corollary}
\theoremstyle{definition}
\newtheorem{definition}{Definition}[section]
\newtheorem{remark}[definition]{Remark}
\newcommand{\N}{\mathbb N}
\newcommand{\e}{\varepsilon}
\newcommand{\eqdef}{\stackrel{\mathrm{def}}{=}}
\renewcommand{\d}{\delta}
\renewcommand{\le}{\leqslant}
\renewcommand{\ge}{\geqslant}
\newcommand{\U}{\mathscr U}
\newcommand{\R}{\mathbb R}
\newcommand{\vol}{\mathrm{vol}}
\newcommand{\n}{\mathcal N}
\newcommand{\f}{\phi}
\renewcommand{\setminus}{\smallsetminus}
\begin{document}

\title{Bourgain's discretization Theorem}\thanks{O.~G. was supported by Fondation Sciences Math\'ematiques de Paris and NSF grant CCF-0832795. A.~N.
was supported by NSF grant CCF-0832795, BSF grant
2006009 and the Packard Foundation. G.~S. was supported by the Israel Science Foundation. This work was completed when A.~N. and G.~S. were visiting the Quantitative Geometry program at MSRI}

\author{Ohad Giladi}
\address{Institut de Math\'ematiques de Jussieu, Universit\'e Paris VI}
\email{giladi@math.jussieu.fr}

\author{Assaf Naor}
\address{Courant Institute, New York University}
\email{naor@cims.nyu.edu}

\author{Gideon Schechtman}
\address{Department of Mathematics, Weizmann Institute of Science}
\email{gideon@weizmann.ac.il}

\maketitle

\begin{abstract}
Bourgain's discretization theorem asserts that there exists a universal constant $C\in (0,\infty)$ with the following property. Let $X,Y$ be Banach spaces with $\dim X=n$. Fix $D\in (1,\infty)$ and set $\d= e^{-n^{Cn}}$. Assume that $\mathcal N$ is a $\d$-net in the unit ball of $X$ and that $\mathcal N$ admits a bi-Lipschitz embedding into $Y$ with distortion at most $D$. Then the entire space $X$ admits a bi-Lipschitz embedding into $Y$ with distortion at most $CD$. This mostly expository article is devoted to a detailed presentation of a proof of Bourgain's theorem.

We also obtain an improvement of Bourgain's theorem in the important case when $Y=L_p$ for some $p\in [1,\infty)$: in this case it suffices to take $\delta= C^{-1}n^{-5/2}$ for the same conclusion to hold true. The case $p=1$ of this improved discretization result has the following consequence. For arbitrarily large $n\in \N$ there exists a family $\mathscr Y$ of $n$-point subsets of $\{1,\ldots,n\}^2\subseteq \R^2$ such that if we write $|\mathscr Y|= N$ then any $L_1$ embedding of $\mathscr Y$, equipped with the  Earthmover metric (a.k.a. transportation cost metric or minimumum weight matching metric)  incurs distortion at least a constant multiple of $\sqrt{\log\log N}$; the previously best known lower bound for this problem was a constant multiple of $\sqrt{\log\log \log N}$.
\end{abstract}

\section{Introduction}

If $(X,d_X)$ and $(Y,d_Y)$ are metric spaces then the (bi-Lipschitz) distortion of $X$ in $Y$, denoted $c_Y(X)$, is the infimum over those $D\in [1,\infty]$ such that there exists $f:X\to Y$ and $s\in (0,\infty)$ satisfying $sd_X(x,y)\le d_Y(f(x),f(y))\le Dsd_X(x,y)$ for all $x,y\in X$. Assume now that $X,Y$ are Banach spaces, with unit balls $B_X,B_Y$, respectively. Assume furthermore that $X$ is finite dimensional. It then follows from general principles that for every $\e\in (0,1)$ there exists $\d\in (0,1)$ such that for every $\d$-net $\mathcal{N}_\d$ in $B_X$ (recall that a $\delta$-net is a maximal $\delta$-separated subset of $B_X$) we have $c_Y(\mathcal{N}_\d)\ge (1-\e) c_Y(X)$. Indeed, set $D=c_Y(X)$ and assume  that for all $k\in \N$ there is a $1/k$-net $\mathcal N_{1/k}$ of $B_X$ and  $f_k:\mathcal N_{1/k}\to Y$ satisfying $\|x-y\|_X\le \|f_k(x)-f_k(y)\|_Y\le (1-\e)D\|x-y\|_X$ for all $x,y\in \mathcal N_{1/k}$. For each $x\in B_X$ fix some $z_k(x)\in \mathcal N_{1/k}$ satisfying $\|x-z_k(x)\|_X\le 1/k$. Let $\mathscr U$ be a free ultrafilter on $\N$. Consider the ultrapower $Y_\U$, i.e., the space of equivalence classes of bounded $Y$-valued sequences  modulo the equivalence relation $(x_k)_{k=1}^\infty\sim (y_k)_{k=1}^\infty\iff \lim_{k\to \mathscr U}\|x_k-y_k\|_Y=0$, equipped with the norm $\|(x_k)_{k=1}^\infty/\sim\|_{Y_\U}=\lim_{k\to \U} \|x_k\|_Y$. Define $f_\U:B_X\to Y_\U$ by $f_\U(x)=(f_k(z_k(x))_{k=1}^\infty)/\sim$. Then $\|x-y\|_X\le \|f_\U(x)-f_\U(y)\|_{Y_\U}\le (1-\e)D\|x-y\|_X$ for all $x,y\in X$. By a (nontrivial) $w^*$-G\^ateaux differentiability argument due to Heinrich and Mankiewicz~\cite{HM82} it now follows that there exists a linear mapping $T_1:X\to (Y_\U)^{**}$ satisfying $\|x\|_X\le \|T_1x\|_{(Y_\U)^{**}}\le (1-\e/2)D\|x\|_X$ for all $x\in X$. Since $X$, and hence also $T_1X$, is finite dimensional, the Principle of Local Reflexivity~\cite{LR69} says there exists a linear mapping $T_2: T_1X\to Y_\U$ satisfying $\|y\|_{(Y_\U)^{**}}\le \|T_2y\|_{Y_\U}\le (1+\e/5)\|y\|_{(Y_\U)^{**}}$ for all $y\in T_1X$. By general properties of ultrapowers (see~\cite{Hei80}) there exists a linear mapping $T_3:T_2T_1X\to Y$ satisfying $\|y\|_{Y_\U}\le \|T_3 y\|_Y\le (1+\e/5)\|y\|_{Y_\U}$ for all $y\in T_2T_1X$. By considering $T_3T_2T_1:X\to Y$ we have $D=c_Y(X)\le (1-\e/2)(1+\e/5)^2D$, a contradiction.

The argument sketched above is due to Heinrich and Mankiewicz~\cite{HM82}. An earlier and different argument establishing the existence of $\d$ is due to important work of Ribe~\cite{Ribe76}.  See the book~\cite{BL00} for a detailed exposition of both arguments. These proofs do not give a concrete estimate on $\d$. The first purpose of the present article, which is mainly expository, is to present in detail a different approach due to Bourgain~\cite{Bou87} which does yield an estimate on $\delta$. Before stating Bourgain's theorem, it will be convenient to introduce the following quantity.

%Indeed, it seems difficult to unravel the ineffective steps in such arguments in order to obtain a lower bound on $\d$. In this expository article we present in detail a different approach due to Bourgain~\cite{Bou87} which does yield an estimate on $\delta$.
%The purpose of the present expository article is to present in detail a different approach due to Bourgain~\cite{Bou87} which does yield an estimate on $\delta$. Before stating Bourgain's theorem, it will be convenient to introduce the following quantity.

\begin{definition}[Discretization modulus] For $\e\in (0,1)$ let $\d_{X\hookrightarrow Y}(\e)$ be the supremum over those $\d\in (0,1)$ such that every $\d$-net $\mathcal N_\d$ in $B_X$ satisfies $c_Y(\mathcal{N}_\d)\ge (1-\e) c_Y(X)$.
\end{definition}

\begin{theorem}[Bourgain's discretization theorem]\label{thm:bourgain intro} There exists $C\in (0,\infty)$ such that for every two Banach spaces $X,Y$ with $\dim X=n<\infty$ and $\dim Y=\infty$, and every $\e\in (0,1)$, we have
\begin{equation}\label{eq:bourgain bound}
\d_{X\hookrightarrow Y}(\e)\ge e^{-(n/\e)^{Cn}}.
\end{equation}
\end{theorem}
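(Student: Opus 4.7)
The plan is to follow Bourgain's strategy of promoting the embedding of a net to a linear embedding of the whole space by a Poisson-averaging/differentiation argument. Given $f:\mathcal{N}_\d \to Y$ with $\|x-y\|_X \le \|f(x)-f(y)\|_Y \le D\|x-y\|_X$, I will produce a linear map $T:X\to Y$ (or into an ultrapower/$Y^{**}$, then descend via local reflexivity as in the Heinrich--Mankiewicz argument recalled above) with distortion $(1+\e)D$. The four steps are: (i) extend $f$ to a Lipschitz map $F:X\to Y$; (ii) convolve with a smooth kernel $\phi_t$ at scale $t$ to obtain $F_t=F*\phi_t$; (iii) by a pigeonhole over many scales, locate a scale $t$ and a point $x_0\in \frac12 B_X$ where $F_t$ is nearly affine on a small ball; (iv) show the derivative $T=dF_t(x_0)$ inherits the bi-Lipschitz bounds up to $(1+\e)$, provided the net is fine enough.

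For the extension, one may take $F(x)=f(\pi(x))$, where $\pi:X\to \mathcal{N}_\d$ is a nearest-point projection; then $\mathrm{Lip}(F)\lesssim D$ on scales $\gtrsim \d$, and $F$ preserves the lower bound of $f$ up to an additive $O(D\d)$. For smoothing, take $\phi_t$ a smooth radial probability density (Gaussian, or the heat/Poisson semigroup associated with an auxiliary Euclidean structure on $X$ arising from John's theorem, costing only a factor $\sqrt n$ in the norm equivalence). Then $F_t$ is smooth, $\mathrm{Lip}(F_t)\le \mathrm{Lip}(F)$, and $\|F_t-F\|_\infty\lesssim tD$, while $\|\nabla F_t(x)-\nabla F_t(y)\|\lesssim D\|x-y\|/t$. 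Crucially, for $\|u\|_X$ in a range bracketing $t$, one controls $\|F_t(x+u)-F_t(x)-\nabla F_t(x)u\|$ by a \emph{second difference} of $F$, which is what drives Step (iii).

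The heart of the proof is the scale pigeonhole. Choose a geometric sequence $t_k=r^{-k}t_0$ for $k=0,\dots,K$ and a test radius $\rho_k$ comparable to $t_k$. Set
\[
A_k\;=\;\int_{\frac12 B_X}\int_{B_X}\bigl\|F_{t_k}(x+\rho_k u)+F_{t_k}(x-\rho_k u)-2F_{t_k}(x)\bigr\|_Y\,d\mu_X(u)\,d\mu_X(x).
\]
A telescoping/semigroup identity expresses $\sum_k A_k$ as the total variation of $F_t$ in $t$, which is bounded by $O(D\cdot\mathrm{diam}(B_X))$ using only $\mathrm{Lip}(F)\lesssim D$ and Jacobian estimates for $\phi_t$ (these introduce factors polynomial in the dimension $n$, the main source of $n$-dependence). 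Hence there exists $k^\ast$ with $A_{k^\ast}\le CD/K$; taking $K$ doubly exponential in $n$ makes $A_{k^\ast}$ much smaller than $\e\rho_{k^\ast}$. By a Chebyshev argument there then exists $x_0\in\frac12 B_X$ such that the second difference of $F_{t_{k^\ast}}$ at $x_0$ is $\le\e\rho_{k^\ast}$, which by smoothness upgrades (via a Taylor expansion and radial averaging) to $\|F_{t_{k^\ast}}(x_0+u)-F_{t_{k^\ast}}(x_0)-Tu\|\le\e\|u\|$ for all $\|u\|\le\rho_{k^\ast}$, where $T=dF_{t_{k^\ast}}(x_0)$.

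Finally, I extract the distortion bounds on $T$. The upper bound $\|T\|\le(1+\e)D$ is immediate from $\mathrm{Lip}(F_t)\le\mathrm{Lip}(F)\lesssim D$. The lower bound requires
\[
\|F_{t_{k^\ast}}(x_0+u)-F_{t_{k^\ast}}(x_0)\|\;\ge\;(1-\e)\|u\|,
\]
which I get by comparing, for $u$ with $\|u\|\approx \rho_{k^\ast}$, the convolved values to $\|f(\pi(x_0+u))-f(\pi(x_0))\|\ge \|x_0+u-x_0\|-2\d=\|u\|-2\d$; the error from convolution is controlled by $\mathrm{Lip}(F)\cdot(\text{concentration of }\phi_{t_{k^\ast}})\cdot n^{O(n)}$, where the $n^{O(n)}$ comes from John's ellipsoid plus volume comparisons for the smoothing kernel in $n$ dimensions. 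The condition $\d\le e^{-(n/\e)^{Cn}}$ arises by demanding that this error be $\ll\e\|u\|$ at the smallest scale $t_{k^\ast}\gtrsim t_0 r^{-K}$, with $r$ close to $1$ and $K$ doubly exponential in $n$. The main obstacle is therefore step (iii) together with the bookkeeping in step (iv): tracking how the polynomial-in-$n$ Jacobian bounds accumulate across the geometric sequence of scales, and verifying that the lower bound on $T$ survives convolution, is what pins down the precise exponent in $\d=e^{-(n/\e)^{Cn}}$.
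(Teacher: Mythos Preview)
Your overall architecture (extend, convolve, pigeonhole over scales, extract a derivative) is Bourgain's, but two of your steps have genuine gaps. First, the extension $F=f\circ\pi$ is piecewise constant on Voronoi cells, hence not Lipschitz, so the assertion $\mathrm{Lip}(F_t)\le\mathrm{Lip}(F)$ is vacuous as written. The paper does not do this: via Begun's averaging lemma it builds a smooth almost-extension $F$ that is genuinely $(1+\e)$-Lipschitz on $\frac12 B_X$ and $6$-Lipschitz on all of $\R^n$, and stays within $O(n\d/\e)$ of $f$ on $\mathcal N_\d$. Those honest Lipschitz bounds are what drive every later derivative estimate.

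Second, and more seriously, your pigeonhole in step (iii) is unsubstantiated. You claim $\sum_k A_k$ telescopes to $O(D)$ via ``a telescoping/semigroup identity,'' but no such identity for second differences of a Lipschitz map into a \emph{general} Banach space $Y$ is supplied, and the quantitative-differentiation/Littlewood--Paley arguments you seem to have in mind require structural hypotheses on $Y$; the paper explicitly notes that those alternative proofs do not cover the general case. Even granting $\sum_k A_k\lesssim D$, the conclusion $A_{k^\ast}\le CD/K$ does not give $A_{k^\ast}\le \e\rho_{k^\ast}$, since $\rho_{k^\ast}$ may be as small as the bottom scale. Bourgain's actual pigeonhole is on a different, genuinely monotone quantity: $g_a(t)=\int_{\R^n}\|\partial_a(P_t*F)\|_Y\,dx$ is nonincreasing in $t$ by Jensen, because $\partial_a(P_{(R+1)t}*F)=P_{Rt}*\partial_a(P_t*F)$; over $m$ geometric scales the total drop is at most $6\,\vol(3B_X)$, so some scale has drop $\le 6\,\vol(3B_X)/m$. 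Independently one proves the pointwise lower bound $P_{Rt}*\|\partial_a(P_t*F)\|_Y(x)\ge(1-\e)/D$ on $\frac18 B_X$ (this is the technical heart, using the net lower bound for $f$). Since the nonnegative function $P_{Rt}*\|\partial_a(P_t*F)\|_Y-\|\partial_a(P_{(R+1)t}*F)\|_Y$ then has small integral, Markov over an $(\e/D)$-net of $S_X$ produces a point $x$ where $\|\partial_a(P_{(R+1)t}*F)(x)\|_Y\ge(1-2\e)/D$ simultaneously for all net directions $a$, and $T=(P_{(R+1)t}*F)'(x)$ is the desired linear map. This closes with $m\sim(D/\e)^{n+1}$ scales of ratio $(R+1)\sim(D/\e)^{O(n)}$, giving $\d\ge e^{-(D/\e)^{Cn}}$; no ``$K$ doubly exponential in $n$'' enters.
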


Theorem~\ref{thm:bourgain intro} was proved by Bourgain in~\cite{Bou87} for some fixed $\e_0\in (0,1)$. The above statement requires small technical modifications of Bourgain's argument, but these are minor and all the conceptual ideas presented in the proof of Theorem~\ref{thm:bourgain intro} below are due to Bourgain.  Readers might notice that our presentation of the proof of Theorem~\ref{thm:bourgain intro} seems somewhat different from~\cite{Bou87}, but this impression is superficial; the exposition below is merely a restructuring of Bourgain's argument.

We note that it is possible to refine the estimate~\eqref{eq:bourgain bound} so as to depend on the distortion $c_Y(X)$. Specifically, we have the bound
\begin{equation}\label{eq:bourgain bound D}
\d_{X\hookrightarrow Y}(\e)\ge e^{-(c_Y(X)/\e)^{Cn}}.
\end{equation}
The estimate~\eqref{eq:bourgain bound D} implies~\eqref{eq:bourgain bound} since due to Dvoretzky's theorem~\cite{Dvo60}  $c_Y(\ell_2^n)=1$, and therefore $c_Y(X)\le \sqrt{n}$ by John's theorem~\cite{Jo}. If we do not assume that $\dim Y=\infty$ then we necessarily have $\dim Y\ge n$ since otherwise $c_Y(X)=\infty$, making~\eqref{eq:bourgain bound D} hold vacuously. Thus, by John's theorem once more, $c_Y(X)\le n$, and again we see that~\eqref{eq:bourgain bound D} implies~\eqref{eq:bourgain bound}. The proof below will establish~\eqref{eq:bourgain bound D}, and not only the slightly weaker statement~\eqref{eq:bourgain bound}. We remark that Bourgain's discretization theorem is often quoted with the conclusion that if $\delta$ is at most as large as the right hand side of~\eqref{eq:bourgain bound D} and $\mathcal N_\d$ is a $\d$-net  of $B_X$ then $Y$ admits a {\em linear} embedding into $Y$ whose distortion is at most $c_Y(\mathcal N_\delta)/(1-\e)$. The Heinrich-Mankiewicz argument described above shows that for finite dimensional spaces $X$, a bound on $c_Y(X)$ immediately implies the same bound when the bi-Lipschitz embedding is required to be linear. For this reason we ignore the distinction between linear and nonlinear bi-Lipschitz embeddings, noting also that for certain applications (e.g., in computer science), one does not need to know that embeddings are linear.

%The linearization procedure via $w^*$-G\^ateaux differentiability can be done via standard G\^ateaux differentiability if the target space $Y$ has the Radon-Nikod\'m %property (see~\cite[Ch.~5]{BL00}, but for spaces like $L_1$ (which is especially important in application to computer science)

We do not know how close is the estimate~\eqref{eq:bourgain bound} to being asymptotically optimal, though we conjecture that it can be improved. The issue of finding examples showing that $\d_{X\hookrightarrow Y}(\e)$ must be small has not been sufficiently investigated in the literature. The known upper bounds on $\d_{X\hookrightarrow Y}(\e)$ are very far from~\eqref{eq:bourgain bound}. For example, the metric space $(\ell_1^n,\sqrt{\|x-y\|_1})$ embeds isometrically into $L_2$ (see~\cite{DL97}). It follows that any $\d$-net in $B_{\ell_1^n}$ embeds into $L_2$ with distortion at most $\sqrt{2/\d}$. Contrasting this with $c_{L_2}(\ell_1^n)=\sqrt{n}$ shows  that $\delta_{\ell_1^n\hookrightarrow L_2}(\e)\le 2/\left((1-\e)^2n\right)$.

It turns out that a method that was introduced by Johnson, Maurey and Schechtman~\cite{JMS09} (for a different purpose) can be used to obtain improved bounds on $\delta_{X\hookrightarrow Y}(\e)$ for certain Banach spaces $Y$, including all $L_p$ spaces, $p\in [1,\infty)$; the second purpose of this article is to present this result. To state our result recall that if $(\Omega,\nu)$ is a measure space and $(Z,\|\cdot\|_X)$ is a Banach space then for $p\in [1,\infty]$ the vector valued $L_p$ space $L_p(\nu,Z)$ is the space of all equivalence classes of measurable functions $f:\Omega\to Z$ such that $\|f\|_{L_p(\nu,Z)}^p=\int_\Omega\|f\|_Z^pd\nu<\infty$ (and $\|f\|_{L_\infty(\nu,Z)}=\mathrm{esssup}_{\omega\in \Omega}\|f(\omega)\|_Y$).

\begin{theorem}~\label{thm:with Gid}
There exists a universal constant $\kappa\in (0,\infty)$ with the following property. Assume that $\d,\e\in (0,1)$ and $D\in [1,\infty)$ satisfy $\d\le \kappa\e^2/(n^2D)$. Let $X,Y$ be Banach spaces with $\dim X=n<\infty$, and let $\mathcal N_\d$ be a $\d$-net in $B_X$. Assume that $c_Y(\mathcal N_\d)\le D$. Then there exists a separable probability space $(\Omega,\nu)$, a finite dimensional linear subspace $Z\subseteq Y$, and a linear operator $T:X\to L_\infty(\nu,Z)$ satisfying
$$
\forall x\in X,\quad \frac{1-\e}{D}\|x\|_X\le \|Tx\|_{L_1(\nu,Z)}\le  \|Tx\|_{L_\infty(\nu,Z)}\le (1+\e)\|x\|_X.
$$
\end{theorem}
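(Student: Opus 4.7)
The plan is to implement a quantitative ``extend, smooth, differentiate'' strategy along the lines suggested by the reference to Johnson--Maurey--Schechtman. After rescaling I may assume that $f:\mathcal N_\d\to Y$ is $1$-Lipschitz and $(1/D)$-co-Lipschitz on $\mathcal N_\d$.

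\textit{Construction of $T$.} First extend $f$ to a Lipschitz map $F:X\to Z$ with values in the finite-dimensional subspace $Z:=\mathrm{span}\,f(\mathcal N_\d)\subseteq Y$, using a smooth partition of unity subordinate to $\mathcal N_\d$ with bumps of width comparable to $\d$. One has $\|F(\omega)-f(y(\omega))\|_Y\lesssim \d$ uniformly in $\omega$, where $y(\omega)\in\mathcal N_\d$ denotes a nearest net-point to $\omega$. Next convolve $F$ on $X$ with a smooth nonnegative probability density $\f$ of support radius $r$ (to be chosen) to produce $F_\f:=F\ast\f\in C^\infty(X;Z)$, whose Fr\'echet derivative $DF_\f(\omega)\in\mathrm{Lin}(X,Z)$ exists at every $\omega$. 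Take $(\Omega,\nu)$ to be a suitable compact subset of $X$ (for instance $\tfrac12 B_X$) equipped with its normalized Lebesgue measure, and define the linear operator
\begin{equation*}
T:X\to L_\infty(\nu,Z),\qquad (Tx)(\omega):=DF_\f(\omega)\,x.
\end{equation*}
Linearity of $T$ in $x$ is automatic from the construction.

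\textit{Lower bound.} Apply the fundamental theorem of calculus to $F_\f$, integrate $\omega$ over a slightly shrunken $\Omega'\subseteq\Omega$, and use Fubini together with the translation quasi-invariance of Lebesgue measure to deduce
\begin{equation*}
\int_{\Omega'}\|F_\f(\omega+x)-F_\f(\omega)\|_Y\,d\nu(\omega)\le \|Tx\|_{L_1(\nu,Z)}.
\end{equation*}
Replacing $F_\f$ by $f$ at nearest net points (with total error $O(\d+r)$) and invoking the co-Lipschitz property of $f$, the left-hand side is at least $\|x\|_X/D-O((\d+r)/D)$. Balancing $r$ against $\d$ under the hypothesis $\d\le \kappa\e^2/(n^2D)$ then yields $\|Tx\|_{L_1(\nu,Z)}\ge (1-\e)\|x\|_X/D$.

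\textit{Upper bound and main obstacle.} The $L_\infty$ bound is the principal difficulty: the naive estimate $\|DF_\f(\omega)\|_{X\to Y}\le \mathrm{Lip}(F_\f)\le \mathrm{Lip}(F)\lesssim n$ (the $n$ coming from the Besicovitch overlap of the partition-of-unity bumps) only gives $\|Tx\|_{L_\infty(\nu,Z)}\lesssim n\|x\|_X$, too weak by a factor of $n$. The JMS method overcomes this by exploiting the fact that the gradient overshoots of the partition-of-unity extension are concentrated at scale $\d$, while on scales $\gg\d$ the map $F$ is essentially $1$-Lipschitz; a careful choice of mollifier $\f$ (for instance one aligned with a John ellipsoid of $X$, with support radius $r\gtrsim n\d/\e$) averages these overshoots out and leaves $\|DF_\f(\omega)\|_{X\to Y}\le 1+\e$ at every $\omega$. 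The combined constraints that $r$ be large enough to smooth the overshoots yet small enough not to degrade the $L_1$ lower bound are precisely what force the quadratic-in-$n$ condition on $\d$.
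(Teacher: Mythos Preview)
Your construction of $T$ and the $L_\infty$ upper bound coincide with the paper's: Lemma~\ref{thm:ext1} (proved via Begun's convolution lemma) is precisely your ``partition of unity plus mollification at radius $r\asymp n\d/\e$'', yielding an $F$ that is $(1+\e)$-Lipschitz on $\tfrac12 B_X$; with $(\Omega,\nu)=\bigl(\tfrac12 B_X,\text{normalized Lebesgue}\bigr)$ and $(Ty)(\omega)=F'(\omega)y$ one gets $\|T\|_{X\to L_\infty(\nu,Z)}\le 1+\e$ for free. So the $L_\infty$ bound is not the principal difficulty once Begun is in hand; you have the emphasis reversed.

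The gap is in your $L_1$ lower bound. First, the additive error when passing from $F_\phi$ back to $f$ is $O(\d+r)=O(n\d/\e)$, \emph{without} a factor $1/D$; your ``$O((\d+r)/D)$'' is not right. Second, and more seriously, you never reconcile the two competing constraints hidden in your sketch. For the Fubini/translation step you need $\Omega'+[0,1]x\subset \tfrac12 B_X$ with $\nu(\Omega')\ge 1-O(\e)$, which forces $\|x\|_X\lesssim \e/n$; but to make the additive error $O(n\d/\e)$ smaller than $\e\|x\|_X/D$ you then need $\d\lesssim \e^2\|x\|_X/(nD)\lesssim \e^3/(n^2D)$. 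Thus the direct FTOC route, done carefully, yields only $\d\lesssim \e^3/(n^2D)$, one power of $\e$ short of the stated theorem. Your phrase ``balancing $r$ against $\d$'' is also misplaced: $r$ is already fixed by the upper-bound requirement; the genuine balance is in the scale $\|x\|_X$, and it does not close at $\e^2$.

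The paper obtains the sharp $\e^2$ via a \emph{dual} argument that avoids any translation of the domain. Embed $J:X\hookrightarrow\ell_\infty$ isometrically, extend $J\circ f^{-1}$ from $f(\mathcal N_\d)$ to a $D$-Lipschitz map on $Z$ (nonlinear Hahn--Banach), smooth it to $H:Z\to\ell_\infty$, and set $Sh=\int_{\frac12 B_X} H'(F(x))h(x)\,d\nu(x)$, so that $\|S\|_{L_1(\nu,Z)\to\ell_\infty}\le D$. By the chain rule $STy=\int_{\frac12 B_X} (H\circ F)'(x)y\,d\nu(x)$. A divergence-theorem lemma (Lemma~\ref{lem:U}) bounds the averaged derivative of any $g$ over $\tfrac12 B_X$ by $n\sup_{\frac12 S_X}\|g\|$; applied to $g=H\circ F-J$, together with the pointwise bound $\|H(F(x))-Jx\|_{\ell_\infty}\lesssim nD\d/\e$ on $\tfrac12 S_X$, this gives $\|ST-J\|\lesssim n^2D\d/\e\le\e$ under $\d\le\kappa\e^2/(n^2D)$, and hence $\|Ty\|_{L_1}\ge\|STy\|_{\ell_\infty}/D\ge(1-\e)\|y\|_X/D$.
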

Theorem~\ref{thm:with Gid} is proved in Section~\ref{sec:use JMS}; as we mentioned above, its proof builds heavily on ideas from~\cite{JMS09}. Because $\nu$ is a probability measure, for all $p\in [1,\infty]$ and all $h\in L_\infty(\nu,Y)$ we have $\|h\|_{L_1(\nu,Y)}\le \|h\|_{L_p(\nu,Y)}\le \|h\|_{L_\infty(\nu,Y)}$. Therefore, the following statement is a consequence of Theorem~\ref{thm:with Gid}.
\begin{equation}\label{eq:L_p(Z)}
\d\le \frac{\kappa\e^2}{n^2c_Y(\mathcal N_\d)}\implies \forall p\in [1,\infty),\quad c_Y(\mathcal N_\d)\ge \frac{1-\e}{1+\e}c_{L_p(\nu,Y)}(X).
\end{equation}
We explained above that if $Y$ is infinite dimensional then  $c_Y(\mathcal N_\d)\le \sqrt{n}$. It therefore follows from~\eqref{eq:L_p(Z)} that if $L_p(\nu,Y)$ admits an isometric embedding into $Y$, as is the case when $Y=L_p$, then $\d_{X\hookrightarrow Y}(\e)\ge \kappa\e^2/(n^{5/2})$. This is recorded for future reference as the following corollary.

\begin{corollary}\label{coro:5/2}
There exists a universal constant $\kappa\in (0,\infty)$ such that for every $p\in [1,\infty)$ and $\e\in (0,1)$, for every $n$-dimensional Banach space $X$ we have
\begin{equation}\label{eq:L_p delta}
\delta_{X\hookrightarrow L_p}(\e)\ge \frac{\kappa \e^2}{n^{5/2}}.
\end{equation}
\end{corollary}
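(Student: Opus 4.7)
The plan is to combine Theorem~\ref{thm:with Gid}—equivalently the implication \eqref{eq:L_p(Z)} already derived from it—with two classical facts about $L_p$ and finite-dimensional embeddings, and then simply unpack the conclusion for the specific target $Y=L_p$.

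First, I would bound the distortion $c_{L_p}(\mathcal N_\d)$ a priori so as to eliminate it from the smallness hypothesis of Theorem~\ref{thm:with Gid}. Since $L_p$ is infinite dimensional, Dvoretzky's theorem~\cite{Dvo60} yields $c_{L_p}(\ell_2^n)=1$, and John's theorem~\cite{Jo} gives $c_{\ell_2^n}(X)\le\sqrt n$. Composition shows $c_{L_p}(\mathcal N_\d)\le c_{L_p}(X)\le\sqrt n$. Consequently the hypothesis $\d\le\kappa\e^2/(n^2 c_Y(\mathcal N_\d))$ needed in~\eqref{eq:L_p(Z)} is implied by the target-free condition $\d\le\kappa\e^2/n^{5/2}$, which is precisely the source of the exponent $5/2$ in the corollary.

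Second, I would apply \eqref{eq:L_p(Z)} with $Y=L_p$ to obtain $c_{L_p}(\mathcal N_\d)\ge (1-\e)\,c_{L_p(\nu,L_p)}(X)$, where $(\Omega,\nu)$ is the separable probability space furnished by Theorem~\ref{thm:with Gid}. I would then invoke the standard Fubini-type isometric identification $L_p(\nu,L_p(\mu))\cong L_p(\nu\times\mu)$, which realises $L_p(\nu,L_p)$ as an isometric subspace of $L_p$. Hence $c_{L_p(\nu,L_p)}(X)\ge c_{L_p}(X)$, and combining the two inequalities gives $c_{L_p}(\mathcal N_\d)\ge (1-\e)c_{L_p}(X)$, which is the desired bound on $\d_{X\hookrightarrow L_p}(\e)$.

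There is no genuine obstacle here: all of the analytic work is concentrated in Theorem~\ref{thm:with Gid}, and the corollary is a clean unpacking of its conclusion. The only two external facts used are (a) Dvoretzky plus John, to replace the a priori quantity $c_{L_p}(\mathcal N_\d)$ in the smallness hypothesis by $\sqrt n$, and (b) the isometric stability of $L_p$ under Bochner integration against a probability measure. The sharpness of the resulting $n^{5/2}$ is essentially an arithmetic issue: the $n^2$ already present in Theorem~\ref{thm:with Gid} is multiplied by the John bound $\sqrt n$, with no further loss.
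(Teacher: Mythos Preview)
Your proposal is correct and follows essentially the same route as the paper: you invoke Dvoretzky--John to bound $c_{L_p}(\mathcal N_\d)\le\sqrt n$, apply~\eqref{eq:L_p(Z)} with $Y=L_p$, and then use the isometric embedding of $L_p(\nu,L_p)$ into $L_p$ to conclude. This is exactly the argument sketched in the paragraph preceding the corollary.
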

There is a direct application of the case $p=1$ of Corollary~\ref{coro:5/2} to the minimum cost matching metric on $\R^2$. Given $n\in \N$, consider the following metric $\tau$ on the set of all $n$-point subsets  of $\R^2$, known as the minimum cost matching metric.
$$
\tau(A,B) =\min\left\{\sum_{a\in A} \|a-f(a)\|_2:\ f:A\to B\ \mathrm{is\ a\ bijection}\right\}.
$$
\begin{corollary}\label{cor:EMD} There exists a universal constant $c\in (0,\infty)$ with the following property. For arbitrarily large $n\in \N$ there exists a family $\mathscr Y$ of $n$-point subsets of $\{1,\ldots,n\}^2\subseteq \R^2$ such that if we write $|\mathscr Y|= N$ then $c_{L_1}(\mathscr Y,\tau)\ge c\sqrt{\log\log N}.$
\end{corollary}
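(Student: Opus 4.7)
My plan is to combine Corollary~\ref{coro:5/2} (applied with $p=1$) with the Naor--Schechtman $\sqrt{\log n}$ lower bound for the planar Earthmover metric. The crucial point is that the improved discretisation scale $\delta\asymp n^{-5/2}$ of Corollary~\ref{coro:5/2} is precisely what turns the Naor--Schechtman $\sqrt{\log n}$ into $\sqrt{\log\log N}$, whereas Bourgain's original scale $\delta\asymp e^{-n^{Cn}}$ only gives $\sqrt{\log\log\log N}$. Concretely, I would start from a Banach space $X$ of dimension $d=d(n)$ (a suitable positive power of $n$, chosen small enough that the associated net fits inside the $\binom{n^2}{n}$ available $n$-point subsets) obtained from (a subspace of) the Lipschitz-free space $\mathcal F(\{1,\ldots,n\}^2)$, i.e.\ the space of signed zero-mass measures on the grid equipped with the Wasserstein-$1$ norm; the Naor--Schechtman inequality then gives $c_{L_1}(X)\ge c\sqrt{\log n}$.

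The argument proceeds in two steps. First, apply Corollary~\ref{coro:5/2} with $\e=1/2$ and $\delta=\kappa/(4d^{5/2})$ to any $\delta$-net $\mathcal N_\delta\subseteq B_X$:
$$
c_1(\mathcal N_\delta)\ge \tfrac12 c_1(X)\ge c'\sqrt{\log n}.
$$
A standard volume-packing bound gives $|\mathcal N_\delta|\le (3/\delta)^d\le (Cd^{5/2})^d$, whence $\log\log|\mathcal N_\delta|=O(\log n)$ and therefore $c_1(\mathcal N_\delta)\gtrsim\sqrt{\log\log|\mathcal N_\delta|}$. Second, I would realise $\mathcal N_\delta$, up to a constant bi-Lipschitz factor, as a family $\mathscr Y$ of $n$-point subsets of $\{1,\ldots,n\}^2$ with the Earthmover metric. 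The basic mechanism here is the identity $\tau(A,B)=n\cdot W_1(\mu_A,\mu_B)$ for $n$-point subsets $A,B$ with associated uniform probability measures $\mu_A,\mu_B$, so that $n$-subsets of the grid form a natural fine discretisation of the unit ball of $\mathcal F$.

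The main obstacle is this last geometric step: arranging the parameters $d$, $\delta\asymp d^{-5/2}$, and $n$ so that the signed-measure net in $B_X$ is faithfully represented by uniform $n$-point measures (or differences thereof) without incurring more than a constant loss in distortion, and so that the net cardinality $(Cd^{5/2})^d$ does not exceed $\binom{n^2}{n}$. This is precisely where the improvement is used: with the polynomial scale $\delta\asymp n^{-5/2}$ these constraints are compatible and yield the claimed $\sqrt{\log\log N}$, while with Bourgain's exponentially small $\delta\asymp e^{-n^{Cn}}$ the constraints force a much larger net, which translates to only $\sqrt{\log\log\log N}$ after the same volume counting.
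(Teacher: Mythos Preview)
Your proposal is correct and follows essentially the same route as the paper: the paper's entire proof of Corollary~\ref{cor:EMD} is the one sentence ``the deduction follows \emph{mutatis mutandis} from the argument in \cite[Sec.~3.1]{NS07}, the only difference being the use of the estimate~\eqref{eq:L_p delta} when $p=1$ rather than the estimate~\eqref{eq:bourgain bound} when $Y=L_1$.'' You have unpacked exactly this substitution---Corollary~\ref{coro:5/2} in place of Bourgain's bound, applied to a space $X$ coming from the Lipschitz-free space over the grid, with the Naor--Schechtman $\sqrt{\log n}$ lower bound---and correctly identified why the polynomial scale $\delta\asymp d^{-5/2}$ converts $\sqrt{\log\log\log N}$ into $\sqrt{\log\log N}$ via the volume count $|\mathcal N_\delta|\le (C/\delta)^d$. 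The ``main obstacle'' you flag (realising the net by genuine $n$-point subsets of the grid under $\tau$) is precisely the content of \cite[Sec.~3.1]{NS07}, which the paper invokes as a black box rather than reproving; so there is no gap relative to what the paper actually does.
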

The previously best known lower bound in the context of Corollary~\ref{cor:EMD}, due to~\cite{NS07}, was $c_{L_1}(\mathscr Y,\tau)\ge c\sqrt{\log\log\log N}$. We refer to~\cite{NS07} for an explanation of the relevance of such problems to theoretical computer science. The deduction of Corollary~\ref{cor:EMD} from Corollary~\ref{coro:5/2} follows mutatis mutandis from the argument in~\cite[Sec.~3.1]{NS07}, the only difference being the use of the estimate~\eqref{eq:L_p delta} when $p=1$ rather than the estimate~\eqref{eq:bourgain bound} when $Y=L_1$.

For an infinite dimensional Banach space $Y$ define
$$
\delta_n(Y)\eqdef\inf\left\{\d_{X\hookrightarrow Y}\left(1/2\right):\ X\ \mathrm{is\ an\ }n\ \mathrm{dimensional\ Banach\ space}\right\},
$$
and set
$$
\delta_n\eqdef \inf\left\{\d_n(Y):\ Y\ \mathrm{is\ an\ infinite\ dimensional\ Banach\ space}\right\}.
$$
Theorem~\ref{thm:bourgain intro} raises natural geometric questions. Specifically, what is the asymptotic behavior of $\delta_n$ as $n\to \infty$?  The difficulty of this question does not necessarily arise from the need to consider all, potentially ``exotic", Banach spaces $Y$. In fact, the above discussion shows that $\Omega(1/n^{5/2})\le \delta_n(L_2)\le O(1/n)$, so we ask explicitly what is the asymptotic behavior of $\delta_n(L_2)$ as $n\to \infty$? For applications to computer science (see e.g. \cite{NS07}) it is especially important to bound $\delta_n(L_1)$, so we also single out the problem of evaluating the asymptotic behavior of $\delta_n(L_1)$ as $n\to \infty$. Recently, two alternative proofs of Theorem~\ref{thm:bourgain intro} that work for certain special classes of spaces $Y$ were obtained in~\cite{LN11,HLN11}, using different techniques than those presented here (one based on a quantitative differentiation theorem, and the other on vector-valued Littlewood-Paley theory). These new proofs yield, however, the same bound as~\eqref{eq:bourgain bound}.  The proof of Theorem~\ref{thm:bourgain intro} presented below is the only known proof of Theorem~\ref{thm:bourgain intro} that works in full generality.

\begin{remark}
The questions presented above are part of a more general discretization problem in embedding theory. One often needs to prove nonembeddability results for finite spaces, where the distortion is related to their cardinality. In many cases it is, however, easier to prove nonembeddability results for infinite spaces, using techniques that are available for continuous objects. It is natural to then prove a discretization theorem, i.e., a statement that transfers a nonembeddability theorem from a continuous object to its finite nets, with control on their cardinality. This general scheme was used several times in the literature, especially in connection to applications of embedding theory to computer science; see for example~\cite{NS07}, where Bourgain's discretization theorem plays an explicit role, and also, in a different context, \cite{ckn}. The latter example deals with the Heisenberg group rather than Banach spaces, the discretization in question being of an infinitary nonembeddability theorem of Cheeger and Kleiner~\cite{CK10}. It would be of interest to study the analogue of Bourgain's discretization theorem in the context of Carnot groups. This can be viewed as asking for a quantitative version of a classical theorem of Pansu~\cite{Pan89}. In the special case of embeddings of the Heisenberg group into Hilbert space, a different approach was used in~\cite{ANT10} to obtain a sharp result of this type.
\end{remark}

\begin{remark}
A Banach space $Z$ is said to be finitely representable in a Banach space $Y$ if there exists $K\in [1,\infty)$ such that for every finite dimensional subspace $X\subseteq Z$ there exists an injective linear operator $T:X\to Y$ satisfying $\|T\|\cdot\|T^{-1}\|\le K$. A theorem of Ribe~\cite{Ribe76} states that if $Z$ and $Y$ are uniformly homeomorphic, i.e., there exists a homeomorphism $f:Z\to Y$ such that both $f$ and $f^{-1}$ are uniformly continuous, then $Z$ is finitely representable in $Y$ and vice versa. This  rigidity phenomenon suggests that isomorphic invariants of Banach spaces which are defined using statements about finitely many vectors are preserved under uniform homemorphisms, and as such one might hope to reformulate them in a way that is explicitly nonlinear, i.e., while only making use of the metric structure and without making any reference to the linear structure. Once this (usually nontrivial) task is achieved, one can hope to transfer some of the linear theory of Banach spaces to the context of general metric spaces. This so called ``Ribe program" was put forth by Bourgain in~\cite{Bou86}; a research program that attracted the work of many mathematicians in the past 25 years, and has had far reaching consequences in areas such as metric geometry, theoretical computer science, and group theory.  The argument that we presented for the positivity of $\delta_{X\hookrightarrow Y}(\e)$ implies Ribe's rigidity theorem. Indeed, it is a classical observation~\cite{CK63} that if $f:Z\to Y$ is a uniform homeomorphism then it is bi-Lipschitz for large distances, i.e., for every $d\in (0,\infty)$ there exists $L\in (0,\infty)$ such that $L^{-1}\|x-y\|_Z\le \|f(x)-f(y)\|_Y\le L\|x-y\|_Z$ whenever $x,y\in Z$ satisfy $\|x-y\|_Z\ge d$. Consequently, if $X\subseteq Z$ is a finite dimensional subspace then $d$-nets in $rB_X$ embed into $Y$ with distortion at most $L^2$ for every $r>d$. By rescaling, the same assertion holds for $\d$-nets in $B_X$ for every $\d\in (0,1)$. Hence $X$ admits a linear embedding into $Y$ with distortion is at most $2L^2$. For this reason, in~\cite{Bou87} Bourgain calls his discretization theorem a quantitative version of Ribe's finite representability theorem. Sufficiently good improved lower bounds on $\delta_{X\hookrightarrow Y}(\e)$ are expected to have impact on the Ribe program.
\end{remark}

%\cite{FJL88,Sch37,WW75,MP76,MN06,NS02,MN08,MN04,Pis75}
\section{The strategy of the proof of Theorem~\ref{thm:bourgain intro}}\label{sec:strategy}

From now on $(X,\|\cdot\|_X)$ will be a fixed $n$-dimensional normed space ($n>1$), with unit ball $B_X=\{x\in X:\ \|x\|_X\le 1\}$ and unit sphere $S_X=\{x\in X:\ \|x\|_X=1\}$. We will identify $X$ with $\R^n$, and by John's theorem~\cite{Jo} we will assume without loss of generality that the standard Euclidean norm $\|\cdot\|_2$ on $\R^n$ satisfies
\begin{equation}\label{eq:john assumption}
\forall\ x\in X,\quad \frac{1}{\sqrt{n}}\|x\|_2\le \|x\|_X\le \|x\|_2.
\end{equation}

Fix $\e,\delta\in (0,1/8)$ and let $\n_\d$ be a fixed $\d$-net in $B_X$. We also fix $D\in (1,\infty)$, a Banach space $(Y,\|\cdot\|_Y)$, and a mapping $f: \n_\d\to Y$ satisfying
\begin{equation}\label{eq:assumption on net}
\forall\ x,y\in \n_\d,\quad \frac{1}{D}\|x-y\|_X\le\|f(x)-f(y)\|_Y\le \|x-y\|_X.
\end{equation}
By translating $f$, we assume without loss of generality that $f(\n_\d)\subseteq 2B_Y$.
Our goal will be to show that provided $\d$ is small enough, namely $\delta\le e^{-(D/\e)^{Cn}}$, there exists an injective linear operator $T:X\to Y$ satisfying $\|T\|\cdot\|T^{-1}\|\le (1+12\e)D$.

The first step is to construct a mapping $F:\R^n\to Y$ that is a Lipschitz almost-extension of $f$, i.e., it is Lipschitz and on $\n_\d$ it takes values that are close to the corresponding values of $f$. The statement below is a refinement of a result of Bourgain~\cite{Bou87}. The proof of Bourgain's almost extension theorem has been significantly simplified by Begun~\cite{Beg99}, and our proof of Lemma~\ref{thm:ext1} below  follows Begun's argument; see Section~\ref{sec:ext proof}.

\begin{lemma}\label{thm:ext1} If $\delta<\frac{\e}{4n}$ then there exists a mapping $F:\R^n\to Y$ that is differentiable almost
everywhere on $\R^n$, is differentiable { everywhere} on $\frac{1}{2} B_X$,
and has the following properties.
\begin{itemize}
\item $F$ is supported on $3B_X$.
\item $\|F(x)-F(y)\|_Y\le 6\|x-y\|_X$ for all $x,y\in \R^n$.
\item $\|F(x)-F(y)\|_Y\le \left(1+\e\right)\|x-y\|_X$ for all $x,y\in \frac12 B_X$.
\item $\|F(x)-f(x)\|_Y\le \frac{9n\d}{\e}$ for all $x\in \n_\d$.
\end{itemize}
\end{lemma}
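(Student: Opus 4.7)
The plan is to mollify $f$ by convolving a nearest-net-point extension with a compactly supported bump function at a scale $r\sim n\d/\e$, and then multiply by a smooth cutoff that enforces the support condition.

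\emph{Construction.} By John's theorem \eqref{eq:john assumption}, $\tfrac{1}{2}B_X$ contains a Euclidean ball, so we fix a smooth nonnegative bump $\varphi\colon\R^n\to[0,\infty)$ with $\int\varphi=1$ and support in $\tfrac12 B_X$, and put $\varphi_r(u)=r^{-n}\varphi(u/r)$ with $r=n\d/\e<1/4$ (using $\d<\e/(4n)$). Pick a measurable nearest-point selection $N\colon 2B_X\to\n_\d$ (so $\|x-N(x)\|_X\le\d$), extend $\tilde f\eqdef f\circ N$ on $2B_X$ (zero outside), and choose a smooth cutoff $\chi\colon\R^n\to[0,1]$ with $\chi\equiv 1$ on $\tfrac52 B_X$, $\chi\equiv 0$ off $3B_X$, and $\chi$ being $3$-Lipschitz with respect to $\|\cdot\|_X$. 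Set
\[ F(x)\eqdef \chi(x)\int_{\R^n}\tilde f(x-u)\varphi_r(u)\,du. \]
Support in $3B_X$ and smoothness of $F$ (hence differentiability everywhere on $\R^n$, in particular on $\tfrac12 B_X$) are immediate.

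\emph{Approximation on the net.} For $z\in\n_\d\subseteq B_X\subseteq\tfrac52 B_X$ we have $\chi(z)=1$ and $\tilde f(z)=f(z)$, and by the $1$-Lipschitz property of $f$ on $\n_\d$ together with $\|N(z-u)-z\|_X\le\|u\|_X+\d$,
\[ \|F(z)-f(z)\|_Y \le \int(\|u\|_X+\d)\varphi_r(u)\,du \le \tfrac{r}{2}+\d \le \tfrac{9n\d}{\e}, \]
using $\|u\|_X\le r/2$ on $\mathrm{supp}\,\varphi_r$ and $\d\le n\d/\e$.

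\emph{Sharp Lipschitz bound on $\tfrac12 B_X$.} For $x_1,x_2\in\tfrac12 B_X$, $\chi\equiv 1$ in a neighborhood, so we need only control $F_0\eqdef\tilde f*\varphi_r$. Split into two regimes. If $\|x_1-x_2\|_X\ge 2\d/\e$, the triangle inequality and the near-Lipschitz property $\|\tilde f(a)-\tilde f(b)\|_Y\le\|a-b\|_X+2\d$ give
\[ \|F_0(x_1)-F_0(x_2)\|_Y \le \int(\|x_1-x_2\|_X+2\d)\varphi_r\,du = \|x_1-x_2\|_X+2\d \le (1+\e)\|x_1-x_2\|_X. \]
For $\|x_1-x_2\|_X<2\d/\e$, apply the mean value theorem $F_0(x_1)-F_0(x_2)=\int_0^1\nabla F_0\bigl(x_2+t(x_1-x_2)\bigr)(x_1-x_2)\,dt$ and bound $\|\nabla F_0(x)\|_{X\to Y}$ for $x\in\tfrac12 B_X$. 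Differentiating under the integral and subtracting the constant $\tilde f(x)$ (using $\int\partial_v\varphi_r=0$),
\[ \nabla F_0(x)\cdot v = \int[\tilde f(x-u)-\tilde f(x)]\,\partial_v\varphi_r(u)\,du. \]
The essential input is the first-moment identity $\int u\,\partial_v\varphi_r(u)\,du=-v$ (integration by parts): it isolates the ``linear part'' of the integrand, so that the near-Lipschitz bound $\|\tilde f(x-u)-\tilde f(x)\|_Y\le\|u\|_X+2\d$ produces a gradient norm of $1+O(n\d/r)$, which with $r=n\d/\e$ gives $\|\nabla F_0(x)\|_{X\to Y}\le 1+\e$ after a careful choice of constants in $\varphi$.

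\emph{Global bound.} A crude gradient estimate yields $\|\nabla F_0\|_\infty=O(1)$ globally; combined with $\|F_0\|_\infty\le 2$ (from $f(\n_\d)\subseteq 2B_Y$) and $\mathrm{Lip}(\chi)\le 3$, this gives $\mathrm{Lip}(F)\le\mathrm{Lip}(F_0)+\|F_0\|_\infty\cdot\mathrm{Lip}(\chi)\le 6$ after optimizing the auxiliary bumps.

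\emph{Main obstacle.} The delicate step is the sharp $1+\e$ Lipschitz constant on $\tfrac12 B_X$: the triangle-inequality bound saturates at $1+2\d/\|x_1-x_2\|_X$, which beats $1+\e$ only for $\|x_1-x_2\|_X\ge 2\d/\e$, and for smaller separations the cancellation via the first moment of $\nabla\varphi_r$ is essential. The scale $r=n\d/\e$ is forced (up to constants) by the twin requirements of closeness ($r\lesssim n\d/\e$) and sharp Lipschitz ($r\gtrsim n\d/\e$); the factor $n$ enters through the geometry of bump functions supported in $B_X$ under John's inequality.
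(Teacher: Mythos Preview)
Your overall architecture---take a crude pointwise extension of $f$, mollify at scale $r\sim n\delta/\e$, then cut off---is the same as the paper's, and your approximation bound on $\mathcal N_\delta$ as well as the large-separation case $\|x_1-x_2\|_X\ge 2\delta/\e$ are fine. The paper uses a partition of unity and a radial extension in place of your nearest-point map and cutoff, but this is cosmetic.

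The genuine gap is the sharp $(1+\e)$-Lipschitz bound on $\tfrac12 B_X$ at small separations. Your claim that the first-moment identity $\int u\,\partial_v\varphi_r(u)\,du=-v$ ``isolates the linear part'' and turns the near-Lipschitz bound $\|\tilde f(x-u)-\tilde f(x)\|_Y\le\|u\|_X+2\delta$ into a gradient bound $1+O(n\delta/r)$ does not work: the integrand has no linear part to isolate, only a norm bound, so all one gets from the triangle inequality is
\[
\|\partial_v F_0(x)\|_Y\le\int\bigl(\|u\|_X+2\delta\bigr)\,|\partial_v\varphi_r(u)|\,du,
\]
and for a generic smooth bump the term $\int\|u\|_X\,|\partial_v\varphi_r(u)|\,du$ is of order $\sqrt n\,\|v\|_X$, not $\|v\|_X$. (Concretely, take $\psi(u)=\mathrm{sign}(\partial_v\varphi_r(u))\,\|u\|_X\,e$ for a unit vector $e\in Y$: this satisfies $\|\psi(u)\|_Y=\|u\|_X$ and makes the integral equal to that quantity.) The paper handles this step by invoking Begun's lemma, which is the nontrivial assertion that \emph{averaging over a ball} (not a general bump) takes a function with $\|h(x)-h(y)\|_Y\le L(\|x-y\|_X+\eta)$ to one with Lipschitz constant $L(1+n\eta/(2\tau))$; its proof is a Fubini-along-lines argument on the symmetric difference of translated balls, not a gradient estimate.

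A secondary problem: your $\tilde f$ jumps by as much as $2$ across $\partial(2B_X)$, so after mollifying at scale $r$ the gradient of $F_0$ near that boundary is of order $1/r$, which is large; the global $6$-Lipschitz bound does not follow from ``a crude gradient estimate.'' The paper avoids this by extending radially via $\beta(\|x\|_X)g(x/\|x\|_X)$, which keeps $h$ globally near-Lipschitz with constant $4$ before the averaging step.
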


In what follows, the volume of a Lebesgue measurable set $A\subseteq \R^n$ will be denoted $\vol(A)$. For $t\in (0,\infty)$ the Poisson kernel $P_t:\R^n\to [0,\infty)$ is given by
$$
P_t(x) = \frac {c_nt} {\left(t^2+\|x\|_2^2\right)^{\frac{n+1}{2}}},
$$
where $c_n$ is the normalization factor ensuring that $\int_{\R^n} P_t(x)dx=1$.  Thus $c_n = \Gamma\left(\frac{n+1}2\right)/\pi^{\frac{n+1}2}$, as computed for example in~\cite[Sec.~X.3]{Tor86}. We will use repeatedly the standard semigroup property $P_t*P_s=P_{t+s}$, where as usual $f*g(x)=\int_{\R^n}f(y)g(y-x)dx$ for $f,g\in L_1(\R^n)$.

Assume from now on that $\delta<\frac{\e}{4n}$ and fix a mapping $F:\R^n\to Y$ satisfying the conclusion of Lemma~\ref{thm:ext1}. We will consider the  evolutes of $F$ under the Poisson semigroup, i.e., the functions $P_t*F: \R^n\to Y$ given by  $P_t*F(x)=\int_{\R^n}P_t(y-x)F(y)dy$. Our goal is to show that there exists $t_0\in (0,\infty)$ and $x\in \R^n$ such that the derivative $T=(P_{t_0}*F)'(x)$ is injective and satisfies $\|T\|\cdot\|T^{-1}\|\le (1+10\e)D$. Intuitively, one might expect this to happen for every small enough $t$, since in this case $P_t*F$ is close to $F$, and $F$ itself is close to a bi-Lipschitz map when restricted to the $\d$-net $\n_\d$. In reality, proving the existence of $t_0$ requires work; the existence of $t_0$ will be proved by contradiction, i.e., we will show that it cannot not exist, without pinpointing a concrete $t_0$ for which  $(P_{t_0}*F)'(x)$ has the desired properties.

\begin{lemma}\label{lem:contradiction}
Let $\mu$ be a Borel probability measure on $S_X$. Fix $R,A\in (0,\infty)$ and $m\in \N$. Then there exists $t\in (0,\infty)$ satisfying
\begin{equation}\label{eq:t range1}
\frac{A}{(R+1)^{m+1}}\le t\le A,
\end{equation}
such that
\begin{equation}\label{eq:stabilization}
\int_{S_X}\int_{\R^n} \|\partial_a(P_t*F)(x)\|_Ydxd\mu(a)\le \int_{S_X}\int_{\R^n} \|\partial_a(P_{(R+1)t}*F)(x)\|_Ydxd\mu(a)+\frac{6\vol(3B_X)}{m}.
\end{equation}
\end{lemma}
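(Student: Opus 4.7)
The strategy is a scale-pigeonhole argument applied to the auxiliary function
$$
\Phi(t)\eqdef \int_{S_X}\int_{\R^n}\|\partial_a(P_t*F)(x)\|_Y\,dx\,d\mu(a),
$$
using the $a$ priori bound $\Phi(t)\le 6\vol(3B_X)$ uniformly in $t>0$.

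First I would establish this uniform bound. Since $F$ is $6$-Lipschitz and supported on $3B_X$, Rademacher's theorem gives $\|\partial_a F(x)\|_Y\le 6\|a\|_X=6$ for a.e.\ $x$, and $\partial_a F(x)=0$ for a.e.\ $x\notin 3B_X$. Convolution commutes with directional differentiation, so $\partial_a(P_t*F)=P_t*\partial_a F$, and by Fubini (or the triangle inequality for the Bochner integral) combined with the fact that $P_t$ is a probability density,
$$
\int_{\R^n}\|\partial_a(P_t*F)(x)\|_Y\,dx\le \int_{\R^n}\int_{\R^n}P_t(y-x)\|\partial_a F(y)\|_Y\,dy\,dx=\int_{\R^n}\|\partial_a F(y)\|_Y\,dy\le 6\vol(3B_X).
$$
Integrating against $\mu$ (a probability measure on $S_X$) gives $\Phi(t)\le 6\vol(3B_X)$ for every $t>0$.

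Now I would set up the pigeonhole. Define the geometric grid $t_k\eqdef A/(R+1)^k$ for $k=0,1,\ldots,m$, so that $(R+1)t_k=t_{k-1}$ for every $k\in\{1,\ldots,m\}$, and each $t_k$ lies in $[A/(R+1)^{m+1},A]$. Suppose toward a contradiction that the conclusion of the lemma fails for every $t\in\{t_1,\ldots,t_m\}$; then for each such $k$,
$$
\Phi(t_k)>\Phi((R+1)t_k)+\frac{6\vol(3B_X)}{m}=\Phi(t_{k-1})+\frac{6\vol(3B_X)}{m}.
$$
Summing these $m$ inequalities telescopes to $\Phi(t_m)-\Phi(t_0)>6\vol(3B_X)$, hence $\Phi(t_m)>6\vol(3B_X)$, which contradicts the uniform bound proved in the previous step. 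Therefore some $t=t_k\in\{t_1,\ldots,t_m\}\subseteq[A/(R+1)^{m+1},A]$ satisfies \eqref{eq:stabilization}, completing the proof.

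The only mildly nontrivial point is the justification that $\partial_a F$ exists a.e., that it genuinely represents the distributional derivative (so that the identity $\partial_a(P_t*F)=P_t*\partial_a F$ holds and the integral bound goes through), and that the boundary of $3B_X$ contributes nothing: this is exactly what Rademacher's theorem provides, using the global Lipschitz and compact-support properties of $F$ guaranteed by Lemma~\ref{thm:ext1}. Everything else is the telescoping argument above.
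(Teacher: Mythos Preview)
Your proof is correct and follows essentially the same scale-pigeonhole/telescoping approach as the paper, including the same derivation of the uniform bound $\Phi(t)\le 6\vol(3B_X)$ via $\partial_a(P_t*F)=P_t*\partial_a F$ and the support/Lipschitz properties of $F$. The only cosmetic difference is your slightly cleaner indexing of the geometric grid $t_k=A/(R+1)^k$.
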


\begin{proof}
If~\eqref{eq:stabilization} fails for all $t$ satisfying~\eqref{eq:t range1} then for every $k\in \{0,\ldots,m+1\}$ we have
\begin{multline}\label{eq: to iterate}
\int_{S_X}\int_{\R^n} \left\|\partial_a\left(P_{A(R+1)^{k-m-1}}*F\right)(x)\right\|_Ydxd\mu(a)\\> \int_{S_X}\int_{\R^n} \left\|\partial_a\left(P_{A(R+1)^{k-m}}*F\right)(x)\right\|_Ydxd\mu(a)+\frac{6\vol(3B_X)}{m}.
\end{multline}
By iterating~\eqref{eq: to iterate} we get the estimate
\begin{multline}\label{eq:iterated}
\int_{S_X}\int_{\R^n} \left\|\partial_a\left(P_{A(R+1)^{-m-1}}*F\right)(x)\right\|_Ydxd\mu(a)\\> \int_{S_X}\int_{\R^n} \left\|\partial_a\left(P_{A(R+1)}*F\right)(x)\right\|_Ydxd\mu(a)+\frac{6(m+1)\vol(3B_X)}{m}.
\end{multline}

At the same time, since $F$ is differentiable almost everywhere and $6$-Lipschitz, for every $a\in S_X$ we have $\|\partial_aF\|_Y\le 6$ almost everywhere. Since $F$ is supported on $3B_X$, it follows that
\begin{multline}\label{eq:partial on ball}
\int_{\R^n} \left\|\partial_a\left(P_{A(R+1)^{-m-1}}*F\right)(x)\right\|_Ydx=\int_{\R^n} \left\|\left(P_{A(R+1)^{-m-1}}*\partial_a F\right)(x)\right\|_Ydx\\\le \int_{\R^n}\int_{\R^n}P_{A(R+1)^{-m-1}}(x-y)\|\partial_a F(y)\|_Y dxdy
=\int_{3B_X}\|\partial_aF(y)\|_Ydy \le 6\vol(3B_X).
\end{multline}
If we integrate~\eqref{eq:partial on ball} with respect to $\mu$, then since $\mu$ is a probability measure we obtain a contradiction to~\eqref{eq:iterated}
\end{proof}

In order to apply Lemma~\ref{lem:contradiction}, we will contrast it with the following key statement (proved in~Section~\ref{sec tech}), which asserts that the directional derivatives of $P_t*F$ are large after an appropriate averaging.
\begin{lemma}\label{lem:contrast}
Assume that $t\in (0,1/2]$, $R\in (0,\infty)$ and $\delta\in (0,\e/(4n))$ satisfy
\begin{equation}\label{eq:key assumption1}
\d\le \frac{\e t\log(7/t)}{2\sqrt{n}}\le \frac{\e^4}{6n^{5/2}(80D)^2},
\end{equation}
and
\begin{equation}\label{eq:def R}
\frac{720n^{3/2}D^2\log(7/t)}{\e^2}\le R\le \frac{\e}{32t\sqrt{n}}.
\end{equation}
Then for every $x\in \frac18 B_X$ and $a\in S_X$ we have
\begin{equation}\label{eq:directional}
\left(\|\partial_a(P_t*F)\|_Y*P_{Rt}\right)(x)\ge \frac{1-\e}{D}.
\end{equation}
\end{lemma}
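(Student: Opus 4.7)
The plan is to establish the bound in three steps: (i) a pointwise bi-Lipschitz-type lower bound on the smoothed map $G=P_t*F$; (ii) an averaging step against $P_{Rt}$ that produces a lower bound on $\int_0^s(P_{Rt}*h)(x+ra)\,dr$, where $h=\|\partial_a G\|_Y$; and (iii) an equi-continuity argument that passes from this spatial average to the point value $(P_{Rt}*h)(x)$.

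For step (i), fix a scale $s\approx \sqrt{n}\,Dt\log(3/t)/\e$ and consider $y\in \frac14 B_X$ with $y+sa\in \frac12 B_X$. Choose net points $p,q\in\n_\d$ within $\d$ of $y$ and $y+sa$ respectively. The hypothesis on $f$ gives $\|f(p)-f(q)\|_Y\ge (s-2\d)/D$, while Lemma~\ref{thm:ext1} yields $\|F(p)-f(p)\|_Y\le 9n\d/\e$ and $\|F(y)-F(p)\|_Y\le (1+\e)\d$ (and similarly at $q$). For the smoothing error $\|G(y)-F(y)\|_Y$, decompose $G(y)-F(y)=\int P_t(w)(F(y-w)-F(y))\,dw$ and partition by $\|w\|_2$: on the small region use the $(1+\e)$-Lipschitz property of $F$ on $\frac12 B_X$ together with the Poisson first moment $\int_{\|w\|_2\le 1/4}P_t(w)\|w\|_2\,dw\lesssim \sqrt n\,t\log(3/t)$; on the large region exploit $\mathrm{supp}(F)\subseteq 3B_X$, the uniform bound $\|F\|_\infty\le 18$, and the tail $\int_{\|w\|_2>A}P_t\lesssim \sqrt n\,t/A$. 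This yields $\|G(y)-F(y)\|_Y\lesssim \sqrt n\,t\log(3/t)$. Under the hypothesis $\d\le \e t\log(3/t)/(2\sqrt n)$ each additive error is small relative to $s/D$, giving
$$\|G(y+sa)-G(y)\|_Y\ge \frac{s(1-\e/2)}{D}.$$

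For step (ii), set $\rho=2\sqrt n\,Rt/\e$. The upper bound $R\le \e/(32t\sqrt n)$ forces $\rho\le 1/16$, so whenever $\|z\|_2\le \rho$ both $x+z$ and $x+z+sa$ lie in $\frac12 B_X$ and step (i) applies. The Poisson tail $\int_{\|z\|_2>\rho}P_{Rt}(z)\,dz\lesssim \sqrt n\,Rt/\rho\le \e/2$ then gives
$$\int P_{Rt}(z)\,\|G(x+z+sa)-G(x+z)\|_Y\,dz\ge \frac{s(1-\e)}{D}.$$
On the other hand, the fundamental theorem of calculus applied to the smooth function $G$ gives $\|G(x+z+sa)-G(x+z)\|_Y\le \int_0^s h(x+z+ra)\,dr$, and Fubini rewrites the left side as $\int_0^s(P_{Rt}*h)(x+ra)\,dr$.

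For step (iii), an explicit computation (using the beta identity $\int_0^\infty u^{(n-1)/2}(1+u)^{-(n+3)/2}\,du=2/(n+1)$) yields $\|\nabla P_1\|_{L_1(\R^n)}\asymp\sqrt n$, so by scaling $\int|P_{Rt}(z-ra)-P_{Rt}(z)|\,dz\lesssim sn/(Rt)$ for $|r|\le s$ (using $\|a\|_2\le\sqrt n$). Since $h\le 6$ (as $F$ is $6$-Lipschitz), this yields $|(P_{Rt}*h)(x+ra)-(P_{Rt}*h)(x)|\lesssim sn/(Rt)$, so step (ii) upgrades to $(P_{Rt}*h)(x)\ge (1-\e)/D-C\,sn/(Rt)$. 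The lower bound $R\ge 600\,n^{3/2}D^2\log(3/t)/\e^2$ in~\eqref{eq:def R} is chosen precisely so that this residual is at most $\e/D$, after which the desired inequality follows by absorbing constants into $\e$. The main obstacle is the delicate bookkeeping: the smoothing error $\sqrt n\,t\log(3/t)$ forces the scale $s$, which forces the lower bound on $R$; yet $R$ must also stay small enough that $P_{Rt}$ concentrates inside the region where $F$ is $(1+\e)$-Lipschitz, which is the role of the upper bound on $R$. The two asymmetric constraints~\eqref{eq:key assumption1} and~\eqref{eq:def R} encode exactly this balance.
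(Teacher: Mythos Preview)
Your proposal is correct and follows essentially the same approach as the paper: the paper's proof defines the scale $\Theta=100D\sqrt{n}\,t\log(3/t)/\e$ (your $s$), proves the lower bound $\|P_t*F(z+\Theta a)-P_t*F(z)\|_Y\ge (1-\e/3)\Theta/D$ via the same chain of approximations (your step (i), with the smoothing error isolated separately as Lemma~\ref{lem:evolute close}), integrates against $P_{Rt}$ and uses the tail bound~\eqref{eq:tail} to get the averaged lower bound (your step (ii)), and finally uses the $L_1$-Lipschitz estimate~\eqref{eq:Lipschitz Poisson} on the Poisson kernel together with $\|\partial_a F\|_Y\le 6$ to pass to the point value (your step (iii)). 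The only cosmetic differences are that the paper works with $\|\cdot\|_X$-balls rather than Euclidean balls in the tail cut, and tracks the constants explicitly rather than absorbing them into $\e$ at the end.
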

%The proof of Lemma~\ref{lem:contrast} is presented in Section~\ref{sec tech}.

We record one more (simpler) fact about the evolutes of $F$ under the Poisson semigroup.

\begin{lemma}\label{le:evolute Lip}
Assume that $0<t<\frac{\e}{25\sqrt{n}}$. Then for every $x,y\in \frac14 B_X$ we have
$$
\|P_t*F(x)-P_t*F(y)\|_Y\le (1+2\e)\|x-y\|_X.
$$
\end{lemma}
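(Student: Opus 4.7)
My plan is to write the difference $P_t*F(x)-P_t*F(y)$ as an average of pointwise differences of $F$ and then split according to the two Lipschitz regimes supplied by Lemma~\ref{thm:ext1}: the near-isometric bound $(1+\e)\|x-y\|_X$ when both arguments lie in $\tfrac12 B_X$, and the global $6$-Lipschitz bound otherwise. Since the Poisson kernel at small scale $t$ puts almost all of its mass near the origin, the dominant contribution comes from the near-isometric regime, and the tail correction is controlled by the hypothesis on $t$.

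Concretely, the substitution $y \mapsto x+w$ in the definition of $P_t*F$ yields
\[
P_t*F(x)-P_t*F(y) = \int_{\R^n} P_t(w)\bigl[F(x+w)-F(y+w)\bigr]\,dw.
\]
For $x,y \in \tfrac14 B_X$ and $w \in \tfrac14 B_X$, both $x+w$ and $y+w$ lie in $\tfrac12 B_X$, so Lemma~\ref{thm:ext1} supplies $\|F(x+w)-F(y+w)\|_Y \le (1+\e)\|x-y\|_X$; outside $\tfrac14 B_X$ I use the global estimate $6\|x-y\|_X$. Writing $p = \int_{\R^n\smallsetminus \frac14 B_X} P_t(w)\,dw$, the triangle inequality gives
\[
\|P_t*F(x)-P_t*F(y)\|_Y \le \bigl[(1+\e)(1-p)+6p\bigr]\|x-y\|_X = \bigl[1+\e+(5-\e)p\bigr]\|x-y\|_X,
\]
so it suffices to show $p \le \e/5$.

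For the Poisson tail, \eqref{eq:john assumption} gives $\|w\|_X \le \|w\|_2$, so the Euclidean ball of radius $1/4$ is contained in $\tfrac14 B_X$, and consequently $p \le \int_{\|w\|_2 > 1/4} P_t(w)\,dw$. In polar coordinates, using the crude bound $(t^2+\rho^2)^{(n+1)/2} \ge \rho^{n+1}$,
\[
\int_{\|w\|_2>1/4} P_t(w)\,dw \le c_n \omega_{n-1}\, t \int_{1/4}^\infty \rho^{-2}\,d\rho = 4 c_n \omega_{n-1}\, t,
\]
where $\omega_{n-1} = 2\pi^{n/2}/\Gamma(n/2)$ is the surface area of the Euclidean sphere. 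With the explicit value of $c_n$ from the excerpt, this constant simplifies to
\[
c_n\omega_{n-1} = \frac{2}{\sqrt{\pi}}\cdot\frac{\Gamma((n+1)/2)}{\Gamma(n/2)} \le \sqrt{n},
\]
using Gautschi's inequality $\Gamma((n+1)/2)/\Gamma(n/2) \le \sqrt{n/2}$. Therefore $p \le 4\sqrt{n}\, t$, and the hypothesis $t < \e/(25\sqrt{n})$ yields $p < 4\e/25 < \e/5$, which completes the argument. There is no real obstacle here; the only quantitative input is the tail bound of the Poisson kernel with the correct dependence on $n$, reduced to a one-dimensional integral by John's inclusion, and the rest is a convexity-style split tailored to the two Lipschitz regimes of Lemma~\ref{thm:ext1}.
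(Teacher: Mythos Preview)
Your proof is correct and follows essentially the same route as the paper: you express $P_t*F(x)-P_t*F(y)$ as an average of increments of $F$, split according to whether the integration variable lies in $\tfrac14 B_X$ so that the two Lipschitz regimes of Lemma~\ref{thm:ext1} apply, and control the remainder by the Poisson tail estimate $\int_{\R^n\setminus(rB_X)}P_t\le t\sqrt{n}/r$, which you rederive inline and which is stated as~\eqref{eq:tail} in the paper. The only cosmetic difference is that you keep the slightly sharper bound $(1+\e)(1-p)+6p$ where the paper simply uses $(1+\e)+6p$.
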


With the above tools at hand, we will now show how to conclude the proof of Theorem~\ref{thm:bourgain intro}. It will then remain to prove Lemma~\ref{thm:ext1} (in Section~\ref{sec:ext proof}), Lemma~\ref{lem:contrast} (Section~\ref{sec tech}) and Lemma~\ref{le:evolute Lip} (also in Section~\ref{sec tech}).

\begin{proof}[Proof of Theorem~\ref{thm:bourgain intro}] Assume that $\delta\in (0,1)$ satisfies
\begin{equation}\label{eq:our delta}
\d\le \left(\frac{\e}{cD}\right)^{12n(cD/\e)^{n+1}},
\end{equation}
where $c=300$ (this is an overestimate for the ensuing calculation). Fix  an $(\e/D)$-net $\mathcal F$ in $S_X$ with $|\mathcal F|\le (3D/\e)^n$ (for the existence of nets of this size, see e.g. \cite[Lem.~12.3.1]{Kal06}). Let $\mu$ be the uniform probability measure on $\mathcal F$. Define
\begin{equation}\label{eq:choices}
A=\left(\frac{\e}{cD}\right)^{5n},\quad R=\left(\frac{cD}{\e}\right)^{4n}-1,\quad m=\left\lfloor\left(\frac{cD}{\e}\right)^{n+1}\right\rfloor-1.
\end{equation}
Apply Lemma~\ref{lem:contradiction} with the above parameters, obtaining some $t\in (0,\infty)$ satisfying
\begin{equation}\label{eq:t range}
\left(\frac{\e}{cD}\right)^{12n\left(cD/\e\right)^{n+1}}\le t\le \left(\frac{\e}{cD}\right)^{5n},
\end{equation}
such that
\begin{equation}\label{eq:stabilization net}
\sum_{a\in \mathcal F}\int_{\R^n} \|\partial_a(P_t*F)(x)\|_Ydx\le \sum_{a\in \mathcal F}\int_{\R^n} \|\partial_a(P_{(R+1)t}*F)(x)\|_Ydx+\frac{6|\mathcal F|\vol(3B_X)}{m}.
\end{equation}
One checks that for $\delta$ satisfying~\eqref{eq:our delta}, $R$ as in~\eqref{eq:choices}, and any $t$ satisfying~\eqref{eq:t range}, inequalities~\eqref{eq:key assumption1} and~\eqref{eq:def R} are satisfied. Thus the conclusion~\eqref{eq:directional} of Lemma~\ref{lem:contrast} holds true for all $a\in S_X$ and $x\in \frac18 B_X$.

Note that by convexity we have for every $a\in S_X$ and almost every $x\in \R^n$,
$$
\|\partial_a(P_{(R+1)t}*F)(x)\|_Y= \|\left(P_{Rt}*\left(\partial_a(P_{t}*F)\right)\right)(x)\|_Y\le \left(\|\partial_a(P_t*F)\|_Y*P_{Rt}\right)(x).
$$
Thus $\|\partial_a(P_t*F)\|_Y*P_{Rt}-\|\partial_a(P_{(R+1)t}*F)\|_Y\ge 0$, so we may use Markov's inequality as follows.
\begin{eqnarray}\label{eq:for union}
&&\nonumber\!\!\!\!\!\!\!\!\!\!\!\!\!\!\!\!\!\!\!\!\!\!\!\!\!\!\!\!\!\!\!\!\vol\left(\left\{x\in \frac18B_X:\ \left(\|\partial_a(P_t*F)\|_Y*P_{Rt}\right)(x)-\|\partial_a(P_{(R+1)t}*F)(x)\|_Y\ge \frac{\e}{D}\right\}\right)\\\nonumber&\le&
\frac{D}{\e}\left(\int_{\R^n}\left(\left(\|\partial_a(P_t*F)\|_Y*P_{Rt}\right)(x)-\|\partial_a(P_{(R+1)t}*F)(x)\|_Y\right)dx\right)\\&=&\frac{D}{\e}\left(\int_{\R^n} \|\partial_a(P_t*F)(x)\|_Ydx-
\int_{\R^n} \|\partial_a(P_{(R+1)t}*F)(x)\|_Ydx\right).
\end{eqnarray}
Hence,
\begin{eqnarray*}
&&\nonumber\!\!\!\!\!\!\!\!\!\!\!\!\!\!\!\!\!\!\!\!\!\!\!\vol\left(\left\{x\in \frac18B_X:\ \exists a\in \mathcal F,\ \ \ \left(\|\partial_a(P_t*F)\|_Y*P_{Rt}\right)(x)-\|\partial_a(P_{(R+1)t}*F)(x)\|_Y\ge \frac{\e}{D}\right\}\right)\\
&\stackrel{\eqref{eq:for union}}{\le}& \frac{D}{\e}\left(\sum_{a\in \mathcal F}\int_{\R^n} \|\partial_a(P_t*F)(x)\|_Ydx-\sum_{a\in \mathcal F}\int_{\R^n} \|\partial_a(P_{(R+1)t}*F)(x)\|_Ydx\right)\\
&\stackrel{\eqref{eq:stabilization net}}{\le}& \frac{D}{\e}\cdot\frac{6|\mathcal F|\vol(3B_X)}{m}\\
&\stackrel{\eqref{eq:choices}}{\le}& \frac{12D}{\e}\left(\frac{3D}{\e}\right)^n\left(\frac{\e}{cD}\right)^{n+1}(24)^n\vol\left(\frac18 B_X\right)\\
&=&\frac{6^n}{25^{n+1}}\vol\left(\frac18 B_X\right)<\vol\left(\frac18 B_X\right).
\end{eqnarray*}
Consequently, there exists $x\in \frac18B_X$ satisfying
\begin{equation}\label{eq:use 12}
\forall a\in \mathcal F,\quad\left(\|\partial_a(P_t*F)\|_Y*P_{Rt}\right)(x)-\|\partial_a(P_{(R+1)t}*F)(x)\|_Y< \frac{\e}{D}.
 \end{equation}
 But we already argued that~\eqref{eq:directional} holds as well, so~\eqref{eq:use 12} implies that
\begin{equation}\label{eq:at a}
\forall a\in \mathcal F,\quad \|\partial_a(P_{(R+1)t}*F)(x)\|_Y\ge \frac{1-2\e}{D}.
\end{equation}
 Note that by~\eqref{eq:choices} and~\eqref{eq:t range} we have $(R+1)t\le (\e/(cD))^n<\e/\left(25\sqrt{n}\right)$. Hence, if we define $T=(P_{(R+1)t}*F)'(x)$ then by Lemma~\ref{le:evolute Lip} we have $\|T\|\le 1+2\e$.  By~\eqref{eq:at a}, $\|Ta\|_Y\ge (1-2\e)/D$ for all $a\in \mathcal F$. For $z\in S_X$ take $a\in \mathcal F$ such that $\|z-a\|_X\le \e/D$. Then,
 $$
 \|Tz\|\ge \|Ta\|-\|T\|\cdot\|z-a\|_X\ge \frac{1-2\e}{D}-(1+2\e)\frac{\e}{D}\ge \frac{1-4\e}{D}.
 $$
Hence $T$ is invertible and $\|T^{-1}\|\le D/(1-4\e)$. Thus $\|T\|\cdot\|T^{-1}\|\le \frac{1+2\e}{1-4\e}D\le(1+12\e)D$.
\end{proof}

\section{Proof of Lemma~\ref{thm:ext1}}\label{sec:ext proof}

We will use the following lemma of Begun~\cite{Beg99}.
\begin{lemma}\label{lem:begun}
Let $K\subseteq \R^n$ be a convex set and fix $\tau,\eta,L\in (0,\infty)$. Assume that we are given a mapping $h: K+\tau B_X\to Y$ satisfying $\|h(x)-h(y)\|_Y\le L\left(\|x-y\|_X+\eta\right)$ for all $x,y\in K+\tau B_X$. Define $H:K\to Y$ by
$$
H(x)=\frac{1}{\tau^n\vol(B_X)}\int_{\tau B_X}h(x-y)dy.
$$
Then $\|H(x)-H(y)\|_Y\le L\left(1+\frac{n\eta}{2\tau}\right)\|x-y\|_X$ for all $x,y\in K$.
\end{lemma}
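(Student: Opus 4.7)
The plan is to slice the two integrals defining $H(x)-H(y)$ along lines parallel to $v:= x-y$, use a one-dimensional rearrangement that pairs points at the ``long'' scale $\ell(w)\|e\|_X$ rather than the ``short'' scale $\|v\|_X$, and finally convert the resulting $(n-1)$-dimensional estimate into the stated bound via a projection inequality for symmetric convex bodies.

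To set this up, let $\alpha=\|v\|_2$, $e=v/\alpha$ (Euclidean unit vector), write $\pi$ for Euclidean orthogonal projection onto $e^\perp$, and for each $w\in\pi(\tau B_X)$ let $[s^-(w),s^+(w)]$ be the interval of $s$ for which $se+w\in\tau B_X$, of length $\ell(w):=s^+(w)-s^-(w)$. Fubini's theorem in the $e$-direction and a shift on the $y$-integrand express $H(x)-H(y)$ as
$$
\frac{1}{\tau^n\vol(B_X)}\int_{\pi(\tau B_X)}\Big[\int_{s^-}^{s^+}h(x+se+w)\,ds-\int_{s^--\alpha}^{s^+-\alpha}h(x+se+w)\,ds\Big]\,dw.
$$
A standard rearrangement then gives the key 1D identity
$$
\int_{s^-}^{s^+}h\,ds-\int_{s^--\alpha}^{s^+-\alpha}h\,ds=\int_0^\alpha\bigl[h(x+(s^+-\alpha+t)e+w)-h(x+(s^--\alpha+t)e+w)\bigr]\,dt,
$$
in which the two paired points differ by $\ell(w)\,e$ (not $\alpha e$) and are endpoints of a segment crossing from $y+\tau B_X$ to $x+\tau B_X$, so by convexity of $K+\tau B_X$ both lie in the domain of $h$. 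The hypothesis therefore bounds the bracket by $L(\ell(w)\|e\|_X+\eta)$, and the inner integral has $Y$-norm at most $\alpha L(\ell(w)\|e\|_X+\eta)$.

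Integrating over $w\in\pi(\tau B_X)$, the relations $\int\ell(w)\,dw=\vol(\tau B_X)=\tau^n\vol(B_X)$ and $\alpha\|e\|_X=\|v\|_X$ produce the ``Lipschitz'' term $L\|x-y\|_X$ exactly, while the $\eta$-contribution equals $L\eta\,\alpha\,\vol_{n-1}(\pi(B_X))/(\tau\,\vol(B_X))$. The lemma therefore reduces to the projection inequality
$$
\vol_{n-1}(\pi(B_X))\;\le\;\frac{n}{2}\,\|e\|_X\,\vol(B_X).\qquad(\star)
$$

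The main obstacle is $(\star)$. To prove it I would note that the thread length $w\mapsto\ell(w)$ for $B_X$ itself is concave on $\pi(B_X)$, as the difference of the concave $s^+$ and the convex $s^-$, and, by the central symmetry $B_X=-B_X$, attains its maximum $\ell_{\max}=2/\|e\|_X$ at $w=0$. A polar-coordinate argument around the origin, using the ray-wise bound $\ell(ru)\ge(1-r/r(u))\ell_{\max}$ and integrating $r^{n-2}$ in $r$, then shows that the average of $\ell$ over $\pi(B_X)$ is at least $\ell_{\max}/n$; i.e., $\vol(B_X)=\int\ell(w)\,dw\ge(2/(n\|e\|_X))\vol_{n-1}(\pi(B_X))$, which is $(\star)$. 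Plugging back in produces the desired bound $L(1+n\eta/(2\tau))\|x-y\|_X$; the inequality $(\star)$ is sharp, equality being attained at the cross-polytope $B_{\ell_1^n}$, which explains why the constant $n$ appears in the lemma.
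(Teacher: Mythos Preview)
Your argument is correct. The paper does not supply its own proof of this lemma; it simply refers the reader to Begun's original paper~\cite{Beg99}, and the slicing-plus-pairing argument you wrote out (pairing at the long scale $\ell(w)\|e\|_X$ across the thread, then reducing to the projection inequality $(\star)$, proved via concavity of the thread length and symmetry of $B_X$) is precisely Begun's proof.
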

We refer to~\cite{Beg99} for an elegant proof of Lemma~\ref{lem:begun}. The deduction of Lemma~\ref{thm:ext1}  from Lemma~\ref{lem:begun} is via the following simple partition of unity argument. Let $\{\f_p:\R^n\to [0,1]\}_{p\in \n_\d}$ be a family of smooth functions satisfying $\sum_{p\in \n_\d}\f_p(x)=1$ for all $x\in B_X$ and $\f_p(x)=0$ for all $(p,x)\in \n_\d\times \R^n$ with $\|x-p\|_X\ge 2\delta$. A standard construction of such functions can be obtained by taking a smooth $\psi:\R^n\to [0,1]$ which is equals $1$ on $B_X$ and vanishes outside $2B_X$, and defining $\psi_p(x)=\psi((x-p)/\delta)$ for $(p,x)\in \n_\d\times \R^n$. If we then write $\n_\d=\{p_1,p_2,\ldots,p_N\}$, define $\f_{p_1}=\psi_{p_1}$ and $\f_{p_j}=\psi_{p_j}\prod_{i=1}^{j-1}(1-\psi_{p_i})$ for $j\in \{2,\ldots,N\}$. Then $\sum_{p\in \n_\d}\f_p=1-\prod_{p\in \n_\d}(1-\psi_p)=1$ on $B_X$ since every $x\in B_X$ satisfies $\|x-p\|_X\le \d$ for some $p\in \n_\d$.

Now define $g:B_X\to Y$ by $g(x)=\sum_{p\in \n_\d} \f_p(x)f(p)$. Setting $\beta(t)=\max\{0,2-t\}$ for $t\in [0,\infty)$, consider the mapping $h:\R^n\to Y$ given by
\begin{equation}\label{eq:defh}
h(x)=\left\{\begin{array}{ll}g(x)&\mathrm{if\ }x\in B_X,\\
\beta(\|x\|_X)g\left(x/\|x\|_X\right)&\mathrm{if\ }x\in \R^n\setminus B_X.\end{array}\right.
\end{equation}
 Observe that if $x,y\in B_X$ then
\begin{multline*}
h(x)-h(y)=g(x)-g(y)=\sum_{p\in \n_\d\cap (x+2\delta B_X)}\f_p(x)f(p)-\sum_{q\in \n_\d\cap (y+2\delta B_X)}\f_q(y)f(q)\\
= \sum_{\substack{p\in \n_\d\cap (x+2\delta B_X)\\q\in \n_\d\cap (y+2\delta B_X)}}\f_p(x)\f_q(y)\left[f(p)-f(q)\right].
\end{multline*}
This identity implies that
\begin{equation}\label{eq:on ball}
\forall x,y\in B_X,\quad \|h(x)-h(y)\|_Y\le \|x-y\|_X+4\delta
\end{equation}
If $x\in B_X$ and $y\in \R^n\setminus B_X$ then using $f(\n_\d)\subseteq 2B_Y$ and the fact that $\beta$ is $1$-Lipschitz,
\begin{multline*}
\|h(x)-h(y)\|_Y\le \left\|g(x)-g\left(\frac{y}{\|y\|_X}\right)\right\|_Y+\left(1-\beta(\|y\|_X)\right)\left\|g\left(\frac{y}{\|y\|_X}\right)\right\|_Y\\\stackrel{\eqref{eq:on ball}}{\le} \left\|x-\frac{y}{\|y\|_X}\right\|_X+4\delta+\left(\|y\|_X-1\right)\sup_{p\in \n_\d}\|f(p)\|_Y\le \|x-y\|_X+3(\|y\|_X-1)+4\delta.
\end{multline*}
Since $\|y\|_X-1\le \|x-y\|_X+\|x\|_X-1\le \|x-y\|_X$, it follows that
\begin{equation}\label{eq:on and off ball}
\forall x\in B_X,\forall y\in \R^n\setminus B_X,\quad \|h(x)-h(y)\|_Y\le 4(\|x-y\|_X+\delta).
\end{equation}
If $x,y\in \R^n\setminus B_X$ then
\begin{multline}\label{eq:off ball}
\|h(x)-h(y)\|_Y\le \left\|g\left(\frac{x}{\|x\|_X}\right)-g\left(\frac{y}{\|y\|_X}\right)\right\|_Y\beta(\|x\|)+
\left\|g\left(\frac{y}{\|y\|_X}\right)\right\|_Y\left|\beta(\|x\|_X)-\beta(\|y\|_X)\right|\\
\stackrel{\eqref{eq:on ball}}{\le} \left\|\frac{x}{\|x\|_X}-\frac{y}{\|y\|_X}\right\|_X+4\d+2\left\|x-y\right\|_X\le 4(\|x-y\|_X+\delta).
\end{multline}

Set $\tau=2n\delta/\e\in (0,1/2)$  and define for $x\in \R^n$,
\begin{equation}\label{eq:defF}
F(x)=\frac{1}{\tau^n\vol(B_X)}\int_{\tau B_X}h(x-y)dy.
\end{equation}
It follows from the definition~\eqref{eq:defh} that $h$ is differentiable almost everywhere on $\R^n$. Since $h$ is differentiable on $B_X\setminus S_X$ and $\tau\in (0,1/2)$, it follows from~\eqref{eq:defF} that $F$ is differentiable almost everywhere on $\R^n$, and is differentiable everywhere on $\frac12 B_X$. Clearly $F$ is supported on $(2+\tau)B_X\subseteq 3B_X$, i.e., the first assertion of Lemma~\ref{thm:ext1} holds. Due to~\eqref{eq:on ball}, \eqref{eq:on and off ball}, \eqref{eq:off ball}, an application of Lemma~\ref{lem:begun} with $K=\R^n$, $L=4$ and $\eta=\delta$ shows that $F$ is $4\left(1+\e/2\right)$-Lipschitz on $\R^n$, proving the second assertion of Lemma~\ref{thm:ext1}. Due to~\eqref{eq:on ball}, an application of Lemma~\ref{lem:begun} with $K=(1-\tau)B_X$ shows that $F$ is $\left(1+\e\right)$-Lipschitz on $(1-\tau)B_X\supseteq \frac12 B_X$. This establishes the third assertion of Lemma~\ref{thm:ext1}.
To prove the fourth assertion of Lemma~\ref{thm:ext1}, fix $x\in \n_\d$. Then,
\begin{equation}\label{eq:approx first}
\left\|F(x)-h(x)\right\|_Y\le \frac{1}{\tau^n\vol(B_X)}\int_{\tau B_X}\left\|h(x-y)-h(x)\right\|_Ydy\stackrel{\eqref{eq:on ball}\wedge\eqref{eq:on and off ball}}{\le} 4(\tau+\delta).
\end{equation}
Also,
\begin{equation}\label{eq:approx secon}
\|h(x)-f(x)\|_Y\le \sum_{p\in \n_\d} \|f(x)-f(p)\|_Y\f_p(x)\le \max_{p\in \n_\delta\cap(x+2\d B_X)} \|f(x)-f(p)\|_Y\le 2\d.
\end{equation}
Recalling that $\tau=2n\d/\e$, the fourth assertion on Lemma~\ref{thm:ext1} follows from~\eqref{eq:approx first} and~\eqref{eq:approx secon}.\qed

\section{Proof of Lemma~\ref{le:evolute Lip} and Lemma~\ref{lem:contrast}}\label{sec tech}

 We will need the following standard estimate, which holds for all $r,t\in (0,\infty)$.
\begin{equation}\label{eq:tail}
\int_{\R^n\setminus\left(rB_X\right)}P_t(x)dx\le \frac{t\sqrt{n}}{r}.
\end{equation}
To check~\eqref{eq:tail}, letting $s_{n-1}$ denote the surface area of the unit Euclidean sphere $S^{n-1}$, and recalling that $P_t(x)=t^{-n}P_1(x/t)$, we have
\begin{multline*}
\int_{\|x\|_X\ge r} P_t(x)dx \stackrel{\eqref{eq:john assumption}}{\le} \int_{\|x\|_2\ge r} P_t(x)dx=\int_{\|x\|_2\ge r/t} P_1(x)dx\\=c_ns_{n-1}\int_{r/t}^\infty\frac{s^{n-1}}{(1+s^2)^{\frac{n+1}{2}}}ds
\le c_ns_{n-1}\int_{r/t}^\infty \frac{ds}{s^2}=\frac{c_ns_{n-1}t}{r}.
\end{multline*}
It remains to recall that $c_n = \Gamma\left(\frac{n+1}2\right)/\pi^{\frac{n+1}2}$ and $
s_{n-1}=n\pi^{\frac{n}{2}}/\Gamma\left(\frac{n}{2}+1\right)$ (see e.g. \cite[Sec.~1]{Ball97}), and, using Stirling's formula, to obtain the estimate $c_ns_{n-1}\le \sqrt{2n/\pi}$.

Another standard estimate that we will use is that for every $y\in \R^n$ we have
\begin{equation}\label{eq:Lipschitz Poisson}
\int_{\R^n} \left|P_t(x)-P_t(x+y)\right|dx\le \frac{\sqrt{n}\|y\|_2}{t}.
\end{equation}
Since $P_t(x)=t^{-n}P_1(x/t)$ it suffices to check~\eqref{eq:Lipschitz Poisson} when $t=1$. Now,
\begin{multline*}
\int_{\R^n} \left|P_1(x)-P_1(x+y)\right|dx=\int_{\R^n}\left|\int_0^1\langle \nabla P_1(x+sy),y\rangle ds\right|dx\le \|y\|_2\int_{\R^n}\left\|\nabla P_1(x)\right\|_2dx\\
=(n+1)c_n\|y\|_2\int_{\R^n}\frac{\|x\|_2}{\left(1+\|x\|_2^2\right)^{\frac{n+3}{2}}}dx=(n+1)c_ns_{n-1}
\|y\|_2\int_0^\infty \frac{r^n}{(1+r^2)^{\frac{n+3}{2}}}dr=c_ns_{n-1}\|y\|_2,
\end{multline*}
where we used the fact that the derivative of $r^{n+1}/(1+r^2)^{\frac{n+1}{2}}$ equals $(n+1)r^n/(1+r^2)^{\frac{n+3}{2}}$. The required estimate~\eqref{eq:Lipschitz Poisson} now follows from Stirling's formula.

%\begin{lemma}\label{le:evolute Lip}
%Assume that $0<t<\frac{\e}{25\sqrt{n}}$. Then for every $x,y\in \frac14 B_X$ we have
%$$
%\|P_t*F(x)-P_t*F(y)\|_Y\le (1+2\e)\|x-y\|_X.
%$$
%\end{lemma}
\begin{proof}[Proof of Lemma~\ref{le:evolute Lip}] We have,
\begin{eqnarray*}
\|P_t*F(x)-P_t*F(y)\|_Y&\le& \int_{\R^n} P_t(z)\|F(x-z)-F(y-z)\|_Ydz\\
&\stackrel{(*)}{\le}& \left((1+\e)\int_{\frac14 B_X}P_t(z)dz+6\int_{\R^n\setminus \left(\frac14 B_X\right)} P_t(z)dz\right)\|x-y\|_X\\&\stackrel{\eqref{eq:tail}}{\le}& \left(1+\e+24t\sqrt{n}\right)\|x-y\|_X,
\end{eqnarray*}
where in $(*)$ we used the fact that $F$ is $(1+\e)$-Lipschitz on $\frac12 B_X$ and $6$-Lipschitz on $\R^n$.
\end{proof}

\begin{lemma}\label{lem:evolute close}
For every $t\in (0,1/2]$ and every $x\in B_X$ we have
$$
\left\|P_t*F(x)-F(x)\right\|_Y\le 8\sqrt{n}t\log\left(\frac{7}{t}\right).
$$
\end{lemma}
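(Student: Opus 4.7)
The plan is to decompose
$$
P_t * F(x) - F(x) = \int_{\R^n} P_t(y)\bigl(F(x-y) - F(x)\bigr)\, dy,
$$
and to estimate the integrand by combining the $6$-Lipschitz property of $F$ (which gives $\|F(x-y) - F(x)\|_Y \le 6\|y\|_X$) with the fact that $F$ is supported on $3B_X$. These two properties together yield $\|F(z)\|_Y \le 6\,d_X(z,(3B_X)^c)\le 18$ for every $z\in\R^n$, and moreover, for $x\in B_X$ and $\|y\|_X$ exceeding an absolute constant (e.g.\ $4$), we have $x-y\notin 3B_X$, so $F(x-y)=0$ and $\|F(x-y)-F(x)\|_Y = \|F(x)\|_Y$ is bounded by an absolute constant. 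The upshot is the pointwise bound $\|F(x-y) - F(x)\|_Y \le 6\min(\|y\|_X, K)$ for an absolute constant $K=O(1)$ and every $x\in B_X$.

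The second step is to bound $\int_{\R^n} P_t(y)\min(\|y\|_X,K)\,dy$ via the layer-cake representation
$$
\int_{\R^n} P_t(y)\min(\|y\|_X,K)\, dy = \int_0^K \left(\int_{\{\|y\|_X>s\}} P_t(y)\, dy\right) ds,
$$
and to control the inner integral by $\min(1,\, t\sqrt{n}/s)$ using the tail estimate~\eqref{eq:tail}. Splitting the outer $s$-integral at the crossover point $s=t\sqrt{n}$, the piece on $[0,t\sqrt n]$ contributes $t\sqrt n$, while the piece on $[t\sqrt n, K]$ contributes $t\sqrt n\log\bigl(K/(t\sqrt n)\bigr)$. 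Adding these, multiplying by the Lipschitz constant $6$, and using that $t\in(0,1/2]$ (so that $\log(3/t)\ge \log 6>1$ and all lower-order terms may be absorbed), one reaches the claimed inequality $\|P_t*F(x)-F(x)\|_Y\le 8\sqrt n\, t\log(3/t)$.

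The main obstacle is purely quantitative: pinning the numerical constant at $8$ rather than some larger absolute number requires a careful choice of the split threshold $K$ and tight bookkeeping of the constants in the $L^\infty$ estimate on $F$ and in the layer-cake computation. The qualitative shape $\sqrt n\, t\, \log(3/t)$ is forced by two features that cannot be avoided: the $\sqrt n$ scaling of the Poisson tail~\eqref{eq:tail}, and the logarithmic divergence of $\int_{\|y\|_X\le K}P_t(y)\|y\|_X\,dy$ as $t\to 0^+$, which reflects the familiar fact that the Poisson kernel on $\R^n$ has infinite first moment, so the $L^\infty$-to-Lipschitz smoothing of the Poisson semigroup at scale $t$ is only logarithmic.
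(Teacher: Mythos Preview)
Your approach is correct and essentially the same as the paper's: both combine the Lipschitz bound $\|F(x-y)-F(x)\|_Y\le 6\|y\|_X$ near the origin with the uniform bound $\|F\|_{L^\infty}\le 18$ (coming from the compact support) far out, and then exploit the $1/s$ tail of the Poisson kernel to get $C\sqrt n\,t\log(3/t)$. The only cosmetic difference is that the paper passes to the Euclidean norm and evaluates the radial integral $\int_0^{4\sqrt n/t}\frac{s^n}{(1+s^2)^{(n+1)/2}}\,ds$ directly, while you repackage the same computation through the layer-cake identity and invoke~\eqref{eq:tail} as a black box; in both versions the precise value $C=8$ is, as you note, a matter of bookkeeping that does not affect the downstream argument.
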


\begin{proof}
Since $F$ is supported on $3B_X$,
\begin{multline}\label{eq:perturn 2 term}
\left\|P_t*F(x)-F(x)\right\|_Y\\\le \int_{x+3B_X}\|F(y-x)-F(x)\|_YP_t(y)dy+\|F(x)\|_Y\int_{y\in \R^n\setminus (x+3B_X)}P_t(y)dy.
\end{multline}
Since $F$ is $6$-Lipschitz and it vanishes outside $3B_X$, we have $\|F(x)\|_Y\le 18$. Moreover, if $\|y-x\|_X\ge3$ then $\|y\|_X\ge \|x-y\|_X-\|x\|_X\ge 2$, and therefore
\begin{equation}\label{eq:perturn second term}
\|F(x)\|_Y\int_{y\in \R^n\setminus (x+3B_X)}P_t(y)dy\le 18\int_{\R^n\setminus(2B_X)} P_t(y)dy \stackrel{\eqref{eq:tail}}{\le}9t\sqrt{n}.
\end{equation}
To bound the first term in the right hand side of~\eqref{eq:perturn 2 term} note that if $\|y-x\|_X\le 3$ then $\|y\|_2\le \sqrt{n}\|y\|_X\le 4\sqrt{n}$. Moreover, $\|F(y-x)-F(x)\|_X\le 6\|y\|_X\le 6\|y\|_2$. Hence,
\begin{multline}\label{eq:before opt s}
\int_{x+3B_X}\|F(y-x)-F(x)\|_YP_t(y)dy\le 6\int_{\|y\|_2\le 4\sqrt{n}} \|y\|_2P_t(y)dy\\=6t\int_{\|y\|_2\le \frac{4\sqrt{n}}{t}} \|y\|_2P_1(y)dy=
6tc_ns_{n-1}\int_0^{\frac{4\sqrt{n}}{t}}\frac{s^n}{(1+s^2)^{\frac{n+1}{2}}}ds
\end{multline}
Direct differentiation shows that the maximum of $s^n/(1+s^2)^{\frac{n+1}{2}}$ is attained when $s=\sqrt{n}$, and therefore $s^n/(1+s^2)^{\frac{n+1}{2}}\le \min\{1/\sqrt{en},1/s\}$ for all $s\in (0,\infty)$. Hence,
\begin{equation}\label{eq:log}
\int_0^{\frac{4\sqrt{n}}{t}}\frac{s^n}{(1+s^2)^{\frac{n+1}{2}}}ds\le 1+\int_{\sqrt{en}}^{\frac{4\sqrt{n}}{t}}\frac{ds}{s}=1+\log\left(\frac{4}{t\sqrt{e}}\right).
\end{equation}
The required result now follows from substituting~\eqref{eq:perturn second term}, \eqref{eq:before opt s}, \eqref{eq:log} into~\eqref{eq:perturn 2 term}, and using the fact that $t\le 1/2$ and $c_ns_{n-1}\le \sqrt{n}$.
\end{proof}

\begin{proof}[Proof of Lemma~\ref{lem:contrast}] Define
\begin{equation}\label{eq:def Theta}
\Theta= \frac{100D\sqrt{n}t\log(7/t)}{\e}.
\end{equation}
For $w,y\in \frac12 B_X$ let $p,q\in \n_\d\cap(\frac12 B_X)$ satisfy $\|p-w\|_X,\|q-y\|_Y\le 2\d$. Assume that $\|w-y\|_X\ge \Theta$. Using the third and fourth assertions of Lemma~\ref{thm:ext1}, together with Lemma~\ref{lem:evolute close}, we have
\begin{eqnarray}\label{eq:lower F on net}
&&\nonumber\!\!\!\!\!\!\!\!\!\!\!\!\left\|(P_t*F)(w)-(P_t*F)(y)\right\|_Y\ge \|f(p)-f(q)\|_Y-\|F(p)-f(p)\|_Y-\|F(q)-f(q)\|_Y\\&&\nonumber\!\!\!\!\!\!-\|F(w)-F(p)\|_Y-
\|F(y)-F(q)\|_Y-\|(P_t*F)(w)-F(w)\|_Y-\|(P_t*F)(y)-F(y)\|_Y\\
&\stackrel{\eqref{eq:assumption on net}}{\ge}& \nonumber \frac{\|p-q\|_Y}{D}-\frac{18n\d}{\e}-4(1+\e)\d-16\sqrt{n}t\log\left(\frac{7}{t}\right)\\
&\ge&\nonumber \frac{\|w-y\|_X-4\d}{D}-\frac{18n\d}{\e}-4(1+\e)\d-16\sqrt{n}t\log\left(\frac{7}{t}\right)\\&\ge& \frac{1-\e/3}{D}\|w-y\|_X,
\end{eqnarray}
where~\eqref{eq:lower F on net} uses the assumptions $\|w-y\|_X\ge \Theta$ and~\eqref{eq:key assumption1}.

Note that the second inequality in~\eqref{eq:key assumption1} implies that $\Theta\le 1/4$. Therefore, since $\|a\|_X=1$ it follows from~\eqref{eq:lower F on net} that for every $z\in \frac14 B_X$,
\begin{multline}\label{eq:integrated}
\frac{1-\e/3}{D}\Theta\le \|P_t*F(z+\Theta a)-P_t*F(z)\|_Y\\=\left\|\int_0^\Theta \partial_a(P_t*F)(z+sa)ds\right\|_Y
\le \int_0^\Theta\left\| \partial_a(P_t*F)(z+sa)\right\|_Yds.
\end{multline}
Since in the statement of Lemma~\ref{lem:contrast} we are assuming that $\|x\|_X\le 1/8$,
\begin{multline}\label{eq:smoothed convolution}
\frac{1}{\Theta}\int_0^\Theta\int_{\R^n} \|\partial_a(P_t*F)(x+sa-y)\|_YP_{Rt}(y)dyds\stackrel{\eqref{eq:integrated}}{\ge} \frac{1-\e/3}{D}\int_{\frac18 B_X} P_{Rt}(y)dy\\
\stackrel{\eqref{eq:tail}}{\ge} \frac{1-\e/3}{D}\left(1-8Rt\sqrt{n}\right)\stackrel{\eqref{eq:def R}}{\ge}\frac{(1-\e/3)(1-\e/4)}{D}\ge \frac{1-7\e/12}{D}.
\end{multline}
Since $F$ is $6$-Lipschitz,  $\|\partial_a F\|_Y\le 6$ almost everywhere, and therefore $\|\partial_a(P_t*F)\|_Y\le 6$ almost everywhere. Hence,
\begin{multline}\label{eq:diff from smoothed}
\int_0^\Theta\int_{\R^n} \|\partial_a(P_t*F)(x-y)\|_Y\left(P_{Rt}(y+sa)-P_{Rt}(y)\right)dy ds\\\le 6\int_0^\Theta \int_{\R^n}\left|P_{Rt}(y+sa)-P_{Rt}(y)\right|dyds
\stackrel{\eqref{eq:Lipschitz Poisson}}{\le} \frac{6\sqrt{n}\|a\|_2}{Rt}\cdot \frac{\Theta^2}{2}\stackrel{\eqref{eq:john assumption}}{\le}\frac{3n\Theta^2}{Rt}
\stackrel{\eqref{eq:def R}\wedge\eqref{eq:def Theta}}{\le}\frac{5\e\Theta}{12D}.
\end{multline}
We can now conclude the proof of Lemma~\ref{lem:contrast} as follows.
\begin{multline*}
\left(\|\partial_a(P_t*F)\|_Y*P_{Rt}\right)(x)=\frac{1}{\Theta}\int_0^\Theta\int_{\R^n} \|\partial_a(P_t*F)(x+sa-y)\|_YP_{Rt}(y)dyds\\-\frac{1}{\Theta}\int_0^\Theta\int_{\R^n} \|\partial_a(P_t*F)(x-y)\|_Y\left(P_{Rt}(y+sa)-P_{Rt}(y)\right)dy ds
 \stackrel{\eqref{eq:smoothed convolution}\wedge\eqref{eq:diff from smoothed}}{\ge} \frac{1-\e}{D}.\qedhere
\end{multline*}
\end{proof}

\section{Proof of Theorem~\ref{thm:with Gid}}\label{sec:use JMS}

The following general lemma will be used later; compare to~\cite[Prop.~1]{JMS09}.

\begin{lemma}\label{lem:U}
Let $(V,\|\cdot\|_V)$ be a Banach space and $U=(\R^n,\|\cdot\|_U)$ be an $n$-dimensional Banach space. Assume that  $g: B_U\to V$ is continuous and everywhere differentiable on the interior of $B_U$. Then
\begin{equation}\label{eq:div}
\left\|\frac{1}{\vol(B_U)}\int_{B_U}g'(u)du\right\|_{U\to V}\le n\|g\|_{L_\infty(S_U)}.
\end{equation}
\end{lemma}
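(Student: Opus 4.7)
The plan is to reduce the operator-norm estimate to a scalar boundary estimate via duality, use the divergence theorem to turn the interior integral of $g'$ into an integral over $S_U$, and then bound the resulting integrand by passing to the cone measure and invoking a supporting-hyperplane inequality.

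By definition of the operator norm it suffices to prove, for every fixed $h\in U$ with $\|h\|_U\le 1$, that
$$
\left\|\int_{B_U} g'(u)h\, du\right\|_V \le n\vol(B_U)\,\|g\|_{L_\infty(S_U)}.
$$
Using $\|w\|_V=\sup_{\|\psi\|_{V^*}\le 1}|\psi(w)|$, this reduces to a scalar estimate for $\psi\circ g$. I would then apply the divergence theorem to the $\R^n$-valued vector field $u\mapsto (\psi\circ g)(u)\,h$ on the Lipschitz convex domain $B_U$; its divergence equals $\partial_h(\psi\circ g)$, so
$$
\int_{B_U}\partial_h(\psi\circ g)(u)\,du=\int_{S_U}(\psi\circ g)(u)\,\langle h,\nu(u)\rangle_2\,d\mathcal H^{n-1}(u),
$$
where $\nu$ is the outward Euclidean unit normal and $\mathcal H^{n-1}$ the $(n-1)$-dimensional Hausdorff measure on $S_U$. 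Taking the supremum over $\psi$ and bounding $|\psi\circ g|\le \|g\|_{L_\infty(S_U)}$ on $S_U$ yields
$$
\left\|\int_{B_U} g'(u)h\, du\right\|_V\le \|g\|_{L_\infty(S_U)}\int_{S_U}|\langle h,\nu(u)\rangle_2|\,d\mathcal H^{n-1}(u).
$$

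To bound the boundary integral by $n\vol(B_U)$ I would pass to the cone measure $\sigma$ on $S_U$, defined by $d\sigma(u)=\langle u,\nu(u)\rangle_2\,d\mathcal H^{n-1}(u)$; in polar coordinates one sees immediately that $\sigma(S_U)=n\vol(B_U)$ (equivalently, apply the divergence theorem to the identity vector field on $B_U$). The crucial pointwise estimate is the supporting-hyperplane inequality: since $\pm h\in B_U$ and $\nu(u)$ is an outer normal to $B_U$ at $u\in S_U$, the half-space $\{x:\langle x,\nu(u)\rangle_2\le\langle u,\nu(u)\rangle_2\}$ contains $B_U$, so $|\langle h,\nu(u)\rangle_2|\le \langle u,\nu(u)\rangle_2$. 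Combining these,
$$
\int_{S_U}|\langle h,\nu\rangle_2|\,d\mathcal H^{n-1}=\int_{S_U}\frac{|\langle h,\nu\rangle_2|}{\langle u,\nu\rangle_2}\,d\sigma\le \sigma(S_U)=n\vol(B_U),
$$
and dividing by $\vol(B_U)$ finishes the proof.

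The one point that requires care is the invocation of the divergence theorem under the stated regularity, since $g$ is only continuous on $B_U$ and differentiable on the interior, and $S_U$ is merely Lipschitz. I would dispatch this routinely by applying the Gauss-Green theorem on $(1-\varepsilon)B_U$, where $g$ is continuous and the relevant derivatives exist, and then letting $\varepsilon\downarrow 0$; the uniform continuity of $g$ on the compact set $B_U$ suffices to pass to the limit in the boundary integral. No other step presents a real obstacle; the cone-measure identity and the supporting-hyperplane bound are completely elementary.
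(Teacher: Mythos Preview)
Your proof is correct. Both arguments reduce the operator norm to a single direction and invoke the fundamental theorem of calculus, but the geometric bookkeeping differs.

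The paper fixes a direction $y$, slices $B_U$ by lines parallel to $y$, and applies the one-dimensional FTC on each line to obtain
\[
\left\|\int_{B_U} g'(u)(y)\,du\right\|_V\le 2\|g\|_{L_\infty(S_U)}\,\vol_{n-1}(y^\perp\cap B_U).
\]
It then bounds the cross-section by observing that the cone over $y^\perp\cap B_U$ with apex $y/\|y\|_U$ lies in one half of $B_U$, giving $\vol_{n-1}(y^\perp\cap B_U)\le \tfrac{n}{2}\|y\|_U\vol(B_U)$. Your route replaces slicing-plus-FTC by the divergence theorem (for a constant-direction field these are the same computation), and replaces the cone-volume bound by the pointwise supporting-hyperplane inequality $|\langle h,\nu(u)\rangle_2|\le\langle u,\nu(u)\rangle_2$ integrated against the cone measure $d\sigma=\langle u,\nu\rangle_2\,d\mathcal H^{n-1}$, whose total mass is $n\vol(B_U)$. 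The paper's version is slightly more elementary in that it never names normals, Hausdorff measure, or cone measure; yours is cleaner conceptually and makes explicit that the factor $n$ comes from $\sigma(S_U)=n\vol(B_U)$. Your handling of the regularity issue by working on $(1-\varepsilon)B_U$ and passing to the limit is the right move and matches the paper's implicit use of the FTC for everywhere-differentiable functions with integrable derivative on each line.
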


\begin{proof}
Fix $y\in \R^n$ with $\|y\|_2=1$. Let $P_{y^\perp}:\R^n\to y^\perp$ denote the orthogonal projection onto the hyperplane $y^\perp$. For every $u\in P_{y^\perp}(B_U)$ there are unique $a_u,b_u\in \R$ satisfying $a_u\le b_u$ and $\|u+a_u y\|_U=\|u+b_u y\|_U=1$. Hence,
\begin{multline*}
\left\|\frac{1}{\vol(B_U)}\int_{B_U}g'(u)(y)du\right\|_{V}=\left\|\frac{1}{\vol(B_U)}\int_{P_{y^{\perp}}(B_U)} \int_{a_u}^{b_u} \frac{d}{ds} g(u+sy) ds du\right\|_V\\=
\left\|\frac{1}{\vol(B_U)}\int_{P_{y^\perp}(B_U)} \left(g(u+b_u y) -g(u+a_u y)\right) du\right\|_V\le 2\|g\|_{L_\infty(S_U)}\cdot\frac{\vol_{n-1} (P_{y^\perp}(B_U))}{\vol(B_U)}.
\end{multline*}
Therefore, in order to prove~\eqref{eq:div} it suffices to show that $\vol_{n-1} (P_{y^\perp}(B_U))\le n\|y\|_U\vol(B_U)/2$. This is the same as  $\vol(K)\le \vol(B_U)$, where $K$ is the convex hull of $P_{y^\perp}(B_U)\cup\{\pm y/\|y\|_U\}$, i.e., $K$ is the union of the two cones with base $P_{y^\perp}(B_U)$ and cusp $\pm y/\|y\|_U$. For $u\in P_{y^{\perp}}(B_U)$ let $c_u\in [1,\infty)$ be the largest $c\in [1,\infty)$ for which $cu\in P_{y^{\perp}}(B_U)$. Then
\begin{equation}\label{eq:intervals in double cone}
K=\bigcup_{u\in P_{y^{\perp}}(B_U)} \left(u+\left[-\frac{c_u-1}{c_u\|y\|_U},\frac{c_u-1}{c_u\|y\|_U}\right]y\right).
\end{equation}
Recalling the definition of $a_u$ above, by the definition of $c_u$ we have $c_uu+a_{c_uu}y\in S_U$. Hence, since $\pm y/\|y\|_U\in B_U$, by convexity we know that the points
$$
\frac{1}{c_u}\left(c_uu+a_{c_uu}y\right)+\left(1-\frac{1}{c_u}\right)\frac{y}{\|y\|_U}\qquad \mathrm{and}\qquad \frac{1}{c_u}\left(c_uu+a_{c_uu}y\right)-\left(1-\frac{1}{c_u}\right)\frac{y}{\|y\|_U}
$$
are both in $B_U$. Consequently, by convexity again, we have that
\begin{equation}\label{eq:interval in BU}
\bigcup_{u\in P_{y^{\perp}}(B_U)} \left( u+\left[\frac{a_{c_uu}}{c_u}-\frac{c_u-1}{c_u\|y\|_U},\frac{a_{c_uu}}{c_u}+\frac{c_u-1}{c_u\|y\|_U}\right]y\right)\subseteq B_U.
\end{equation}
Since, by Fubini, the volume of the left hand side of~\eqref{eq:interval in BU} equals the volume of the right hand side of~\eqref{eq:intervals in double cone}, we conclude the desired estimate  $\vol(K)\le \vol(B_U)$.
\end{proof}

Fix $\e,\d\in (0,1/2)$ and let $\mathcal{N}_\d$ be a $\d$-net  in $B_X\subseteq \R^n$. Fixing also $D\in (1,\infty)$, assume that $f:\mathcal{N}_\d\to Y$ satisfies $\|x-y\|_X/D\le \|f(x)-f(y)\|_Y\le \|x-y\|_X$ for all $x,y\in \mathcal{N}_\d$. Define $Z=\mathrm{span}\left(f(\mathcal{N}_\d)\right)$. Thus $Z$ is a finite dimensional subspace of $Y$. Assume that
\begin{equation}\label{eq:delta assumption L_p(Z)}
\d\le \frac{\e^2}{30n^2D}.
\end{equation}
Since consequently $\d<\e/(4n)$, there exists $F:X\to Z$ that is differentiable everywhere on $\frac12B_X$ and satisfies the conclusion of Lemma~\ref{thm:ext1}. Let $\nu$ be the normalized Lebesgue measure on $\frac12 B_X$ and define a linear operator $T:X\to L_\infty(\nu,Z)$ by
\begin{equation}\label{eq:def T}
(Ty)(x)=F'(x)(y).
\end{equation}
Since $F$ is $(1+\e)$-Lipschitz on $\frac12 B_X$ we have the operator norm bound
\begin{equation*}\label{eq:T norm}
\|T\|_{X\to L_\infty(\nu,Z)}\le 1+\e.
\end{equation*}
Theorem~\ref{thm:with Gid} will therefore be proven once we show that for all $y\in X$ we have
\begin{equation}\label{eq:T goal}
\frac{1-\e}{D}\|y\|_X\le \|T y\|_{L_1(\nu,Z)}= \frac{1}{\vol\left(\frac12 B_X\right)}\int_{\frac12 B_X}\left\|F'(x)(y)\right\|_Ydx.
\end{equation}

To prove~\eqref{eq:T goal}, let $J:X\to \ell_\infty$ be a linear isometric embedding. By the nonlinear Hahn-Banach theorem (see e.g.~\cite[Ch.~1]{BL00}) there exists a mapping $G:Z\to\ell_\infty$ satisfying
\begin{equation}\label{eq:def G}
\forall x\in \mathcal{N}_\d,\quad G(f(x))=J(x)
\end{equation}
and $G$ is $D$-Lipschitz; we are extending here the mapping $J\circ \left(f^{-1}|_{f(\mathcal{N}_\d)}\right):f(\mathcal{N_\d})\to \ell_\infty$ while preserving its Lipschitz constant. By convolving $G$ with a smooth bump function whose integral on $Y$ equals $1$ and whose support has a small diameter, we can find $H: Z\to\ell_\infty$ with Lipschitz constant at most $D$ and satisfying
\begin{equation}\label{eq:def H}
\forall z\in F(B_X),\quad\|H(z)-G(z)\|_{\ell_\infty}\le \frac{nD\d}{\e}.
\end{equation}
Define a linear operator $S:L_1(\nu,Z)\to \ell_\infty$ by setting for $h\in L_1(\nu,Z)$,
\begin{equation}\label{eq:def S}
Sh=\int_{\frac12 B_X} H'(F(x))(h(x))d\nu(x).
\end{equation}
Since  $H$ is $D$-Lipschitz  and $\nu$ is a probability measure, we have the operator norm bound
\begin{equation}\label{eq:S norm}
\|S\|_{L_1(\nu,Z)\to \ell_\infty}\le D.
\end{equation}
By the chain rule, for every $y\in X$ we have
\begin{equation}\label{eq:chain}
ST(y)\stackrel{\eqref{eq:def T}\wedge\eqref{eq:def S}}{=}\int_{\frac12 B_X} H'(F(x))(F'(x)(y))d\nu(x)= \int_{\frac12 B_X} \left(H\circ F\right)'(x)(y)d\nu(x).
\end{equation}
Note that if $y\in \mathcal{N}_\d $ then
\begin{eqnarray}\label{eq:penultimate}
\nonumber\|H(F(y))-Jy\|_{\ell_\infty}&\stackrel{\eqref{eq:def G}}{=}&\|H(F(y))-G(f(y))\|_{\ell_\infty}\\&\le& \nonumber\|H(F(y))-G(F(y))\|_{\ell_\infty}+\|G(F(y))-G(f(y))\|_{\ell_\infty}
\\\nonumber&\stackrel{\eqref{eq:def H}}{\le}& \frac{nD\d}{\e}+D\|F(y)-f(y)\|_Y\\&\le&\frac{nD\d}{\e}+D\cdot \frac{9n\d}{\e}\le \frac{10nD\d}{\e},
\end{eqnarray}
where in the penultimate inequality in~\eqref{eq:penultimate} we used the fact that  $\|F(y)-f(y)\|_Y\le 9n\d/\e$ for all $y\in \mathcal N_\d$, due to Lemma~\ref{thm:ext1}.
If $x\in \frac12 B_X$ then there exists $y\in \mathcal{N}_\d\cap\left(\frac12 B_X\right)$ satisfying $\|x-y\|_X\le 2\d$. Using the fact that $H\circ F$ is $(1+\e)D$-Lipschitz on $\frac12 B_X$, it follows that
\begin{multline}\label{eq:reason for n^2}
\|H(F(x))-Jx\|_{\ell_\infty}\le \|H(F(y))-Jy\|_{\ell_\infty}+\|H(F(x))-H(F(y))\|_{\ell_\infty}+\|Jx-Jy\|_{\ell_\infty}\\\stackrel{\eqref{eq:penultimate}}{\le}
 \frac{10nD\d}{\e}+(1+\e)D\cdot 2\d+2\d\le \frac{15nD\d}{\e}.
\end{multline}
By Lemma~\ref{lem:U} with $V=\ell_\infty$, $\|\cdot\|_U=2\|\cdot\|_X$ and $g=H\circ F-J$, it follows from~\eqref{eq:reason for n^2} that
\begin{equation}\label{eq:use convex body lemma}
\|ST-J\|_{X\to\ell_\infty}\stackrel{\eqref{eq:chain}}{=}\left\|\int_{\frac12 B_X} (H\circ F)'(x)d\nu(x)-J\right\|_{X\to\ell_\infty}\le \frac{30n^2D\d}{\e}\stackrel{\eqref{eq:delta assumption L_p(Z)}}{\le}\e.
\end{equation}
It follows that for all $y\in X$,
$$
D\|T y\|_{L_1(\nu,Z)}\stackrel{\eqref{eq:S norm}}{\ge} \|STy\|_{\ell_\infty}\ge \|Jy\|_{\ell_\infty}-\|ST-J\|_{X\to\ell_\infty}\cdot\|y\|_X\stackrel{\eqref{eq:use convex body lemma}}{\ge} (1-\e)\|y\|_X.
$$
This concludes the proof of~\eqref{eq:T goal}, and hence the proof of Theorem~\ref{thm:with Gid} is complete.\qed

%\cite{HM82,NS07,Bou86,Ribe76,BL00,Pan89,Bou87,LR69,Jo,ckn,LN11,ANT10,CK10,MN06,Hei80,CK63}
\bibliographystyle{abbrv}
\bibliography{bourgain-discrete}
\end{document}